\newcommand{\scal}[2]{\langle #1,#2\rangle}
\newcommand{\cc}[1]{\mathbf C^{#1}}
\newcommand{\nn}[1]{\mathbf N^{#1}}
\newcommand{\rr}[1]{\mathbf R^{#1}}
\newcommand{\zz}[1]{\mathbf Z^{#1}}
\newcommand{\mabfs}{{\boldsymbol s}}
\newcommand{\mabfsi}{{\boldsymbol \sigma}}
\newcommand{\nm}[2]{\Vert #1\Vert _{#2}}
\newcommand{\sets}[2]{\{ \, #1\, ;\, #2\, \} }
\newcommand{\ep}{\varepsilon}
\newcommand{\fy}{\varphi}
\newcommand{\cdo}{\, \cdot \, }
\newcommand{\eabs}[1]{\langle #1\rangle}     
\newcommand{\vrum}{\vspace{0.1cm}}
\newcommand{\maclA}{\mathcal A}
\newcommand{\maclH}{\mathcal H}
\newcommand{\maclS}{\mathcal S}
\newcommand{\mascF}{\mathscr F}
\newcommand{\mascS}{\mathscr S}
\numberwithin{equation}{section}          
\newtheorem{thm}{Theorem}
\numberwithin{thm}{section}
\newtheorem*{tom}{\rubrik}
\newcommand{\rubrik}{}
\newtheorem{prop}[thm]{Proposition}
\newtheorem{lemma}[thm]{Lemma}
\theoremstyle{definition}
\newtheorem{defn}[thm]{Definition}
\newtheorem{example}[thm]{Example}
\theoremstyle{remark}
\newtheorem{rem}[thm]{Remark}
\author{Joachim Toft}
\address{Department of Mathematics,
Linn{\ae}us University, V{\"a}xj{\"o}, Sweden}
\email{joachim.toft@lnu.se}
\title{Tensor products for Gelfand-Shilov and Pilipovi{\'c} distribution spaces}
\keywords{Ultradistributions, Fubbini}
\subjclass[2010]{46A32, 46Fxx, 46M05}
\begin{document}


\begin{abstract}
We show basic properties on tensor products for Gelfand-Shilov distributions
and Pilipovi{\'c} distributions. This also includes the Fubbini's property of such
tensor products. We also apply the Fubbini property to deduce some properties
for short-time Fourier transforms of Gelfand-Shilov and Pilipovi{\'c}
distributions.
\end{abstract}

\maketitle

\par

\section{Introduction}\label{sec0}

\par

An important issue in mathematics concerns tensor products. When
considering the functions $f_j$ defined on $\Omega _j\subseteq \rr {d_j}$, $j=1,2$,
and with values in $\mathbf C$, their tensor product
$f_1\otimes f_2$ is the function from $\Omega _1\times \Omega _2$ to $\mathbf C$
given by the formula
$$
(f_1\otimes f_2) (x_1,x_2) =f_1(x_1)f_2(x_2),\qquad x_j\in \Omega _j,\ j=1,2.
$$
Let $f_j,\fy _j\in \mascS (\rr {d_j})$, $f=f_1\otimes f_2$,
$\fy \in \mascS (\rr {d_1+d_2})$, and let $\psi _1$ and $\psi _2$
be given by
\begin{equation}\label{Eq:psijFunctions}
\psi _1(x_1) = \scal {f_2}{\fy (x_1,\cdo )}
\quad \text{and}\quad
\psi _2(x_2) = \scal {f_1}{\fy (\cdo ,x_2)},
\end{equation}
(For notations, see \cite{Ho1} and Section \ref{sec1}.) Then it follows that
\begin{gather}
\scal f{\fy _1\otimes \fy _2} =  \scal {f_1}{\fy _1}\scal {f_2}{\fy _2},
\label{Eq:TensorTensor1}
\intertext{and that the Fubbini's property}
\scal f \fy = \scal {f_1}{\psi _1} = \scal {f_2}{\psi _2}
\label{Eq:TensorTensor2}
\end{gather}
holds.

\par

The formulae \eqref{Eq:TensorTensor1} and \eqref{Eq:TensorTensor2}
are essential when searching for extention of tensor products to distributions.
By the analysis in \cite[Chapter V and VII]{Ho1}, we have the following.

\par

\begin{thm}\label{Thm:SchwartzTensor}
Let $f_j\in \mascS '(\rr {d_j})$, $\fy \in \mascS (\rr {d_1+d_2})$
and let $\psi _j$ be given by \eqref{Eq:psijFunctions},
$j=1,2$. Then $\psi _j\in  \mascS (\rr {d_2})$, $j=1,2$, and
there is a unique $f\in \mascS '(\rr {d_1+d_2})$ such that
for every $\fy _1\in \mascS (\rr {d_1})$ and
$\fy _2\in \mascS (\rr {d_2})$, \eqref{Eq:TensorTensor1} and
\eqref{Eq:TensorTensor2} hold.
\end{thm}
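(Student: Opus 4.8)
The plan is to follow the classical route: first show that each $\psi _j$ is a Schwartz function, then define $f$ through the iterated pairing, verify the product formula \eqref{Eq:TensorTensor1} on pure tensors, and finally use density of tensor products to obtain both the Fubbini symmetry \eqref{Eq:TensorTensor2} and uniqueness.

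First I would prove $\psi _1\in \mascS (\rr {d_1})$ (and symmetrically $\psi _2\in \mascS (\rr {d_2})$). Smoothness follows by differentiating under the bracket: the difference quotients of $x_1\mapsto \fy (x_1,\cdo )$ converge in $\mascS (\rr {d_2})$ to $(\partial _{x_1}\fy )(x_1,\cdo )$, so continuity of $f_2$ gives $\partial ^\gamma \psi _1(x_1)=\scal {f_2}{(\partial _{x_1}^\gamma \fy )(x_1,\cdo )}$. For the decay I would invoke the seminorm estimate valid for any $f_2\in \mascS '(\rr {d_2})$: there are $C>0$ and $N\ge 0$ with $\abp {\scal {f_2}{g}}\le C\sum _{\abp \alpha ,\abp \beta \le N}\sup _{x_2}\abp {x_2^\alpha \partial ^\beta g(x_2)}$. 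Applying this to $g=(\partial _{x_1}^\gamma \fy )(x_1,\cdo )$ and multiplying by $x_1^\delta$ bounds $\abp {x_1^\delta \partial ^\gamma \psi _1(x_1)}$ by a single Schwartz seminorm of $\fy $ on $\rr {d_1+d_2}$; this is finite and, taken over all $\gamma ,\delta $, shows not only that $\psi _1\in \mascS (\rr {d_1})$ but also that each Schwartz seminorm of $\psi _1$ is dominated by one of $\fy $.

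With this in hand I would define $f$ by $\scal f\fy :=\scal {f_1}{\psi _1}$. The seminorm domination just obtained, combined with the continuity of $f_1$ on $\mascS (\rr {d_1})$, shows $f$ is a continuous linear functional, so $f\in \mascS '(\rr {d_1+d_2})$, and the first equality in \eqref{Eq:TensorTensor2} holds by definition. For a pure tensor $\fy =\fy _1\otimes \fy _2$ one computes $\psi _1(x_1)=\fy _1(x_1)\scal {f_2}{\fy _2}$, hence $\scal f{\fy _1\otimes \fy _2}=\scal {f_1}{\fy _1}\scal {f_2}{\fy _2}$, which is \eqref{Eq:TensorTensor1}.

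The hard part will be the Fubbini symmetry $\scal {f_1}{\psi _1}=\scal {f_2}{\psi _2}$ together with uniqueness. The symmetric construction produces a second continuous functional $\fy \mapsto \scal {f_2}{\psi _2}$, and by the computation above both functionals take the value $\scal {f_1}{\fy _1}\scal {f_2}{\fy _2}$ on every pure tensor $\fy _1\otimes \fy _2$. The remaining obstacle, where the nuclearity of $\mascS $ is really used, is that finite linear combinations of such pure tensors are dense in $\mascS (\rr {d_1+d_2})$. I would establish this by Hermite expansion, writing $\fy =\sum _{\alpha ,\beta }c_{\alpha ,\beta }\,(h_\alpha \otimes h_\beta )$ with $h_\alpha $, $h_\beta $ the Hermite functions on $\rr {d_1}$, $\rr {d_2}$, and checking that the partial sums converge to $\fy $ in the Schwartz topology (the coefficients $c_{\alpha ,\beta }$ decay faster than any power of $\abp \alpha +\abp \beta $, which is the Hermite characterization of $\mascS $). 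Two continuous functionals agreeing on a dense subspace coincide, which yields \eqref{Eq:TensorTensor2}. Finally, the same density shows that any $g\in \mascS '(\rr {d_1+d_2})$ satisfying \eqref{Eq:TensorTensor1} agrees with $f$ on a dense set and hence equals $f$, giving uniqueness.
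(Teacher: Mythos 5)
Your argument is correct, but it takes a genuinely different route from the one the paper has in mind. The paper does not prove Theorem \ref{Thm:SchwartzTensor} itself (it cites H{\"o}rmander), and its proofs of the analogous Theorems \ref{Prop:GSTensor} and \ref{Thm:TensorprodDistr} show which framework it follows: the crucial step there is Lemma \ref{Lemma:A.3}, asserting that a distribution vanishing on all pure tensors is zero, and this is obtained not by a density argument but by mollification with tensor-product approximate identities $\phi _\ep =|\ep |^{-d}(\phi _1\otimes \phi _2)(\ep ^{-1}\cdo )$, combined with the Riemann-sum convergence and associativity statements of Lemmas \ref{Lemma:A.1} and \ref{Lemma:A.2}, giving $\scal f\fy =\lim _{\ep \to 0}((\check f*\phi _\ep )*\fy )(0)=0$. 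You instead prove the stronger fact that finite linear combinations of pure tensors are dense in $\mascS (\rr {d_1+d_2})$, via Hermite expansions --- which is exactly the Reed--Simon approach the paper adopts in Section \ref{sec3} for Pilipovi{\'c} spaces, not the one it uses in Section \ref{sec2}. Both are complete. Your route is shorter for $\mascS$ and yields density outright; the mollification route avoids any basis and transfers to the non-Fourier-invariant classes $\maclS _s^\sigma$, where Hermite functions are less well adapted. The one step you assert rather than prove --- that the Hermite partial sums of $\fy$ converge in the topology of $\mascS$, not merely in $L^2$ --- is where the real content of your density claim lies; it is a standard fact, but you should cite it or sketch it (e.{\,}g.\ via the equivalence of the Schwartz seminorms with the seminorms $\fy \mapsto \nm {H_d^N\fy}{L^2}$).
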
 

\par

The existence of a distribution $f$ in the previous theorem which satisfies
\eqref{Eq:TensorTensor1} can also be deduced by a general and
abstract result on tensor products for nuclear spaces
(see \cite[Chapter 50]{Tre}). On the
other hand, in order to reach the Fubbini property \eqref{Eq:TensorTensor2}, it seems
that more structures are needed.

\par

A more specific approach in the lines of the ideas in \cite{Tre}
is indicated in \cite{LozPer,ReSi}, where
$\mascS (\rr d)$ and $\mascS '(\rr d)$ are described by suitable series
expansions of Hermite functions. By following such approach, the situations
are essentially reduced to
questions on tensor products of weighted $\ell ^2$ spaces, and both
properties \eqref{Eq:TensorTensor1} and \eqref{Eq:TensorTensor2} follows
from such approach.

\par

In Sections \ref{sec2} and \ref{sec3} we show that Theorem \ref{Thm:SchwartzTensor}
holds in the context of Gelfand-Shilov spaces, Pilipovi{\'c} spaces and
their distribution (dual) spaces. In particular, we prove that the following results hold true.

\par

\begin{thm}\label{Prop:GSTensor}
Let $s_j,\sigma _j>0$, $f_j\in (\maclS _{s_j}^{\sigma _j})'(\rr {d_j})$,
$\fy \in \maclS _{s_1,s_2}^{\sigma _1,\sigma _2}(\rr {d_1+d_2})$
and let $\psi _j$ be given by \eqref{Eq:psijFunctions},
$j=1,2$.
Then $\psi _j\in  \maclS _{s_j}^{\sigma _j}(\rr {d_2})$, $j=1,2$, and
there is a unique $f\in
(\maclS _{s_1,s_2}^{\sigma _1,\sigma _2})'(\rr {d_1+d_2})$ such that
for every $\fy _1\in \maclS _{s_1}^{\sigma _1}(\rr {d_1})$ and
$\fy _2\in \maclS _{s_2}^{\sigma _2}(\rr {d_2})$,
\eqref{Eq:TensorTensor1} and
\eqref{Eq:TensorTensor2} hold.

\par

The same holds true with 
$\Sigma _{s_j}^{\sigma _j}$, $(\Sigma _{s_j}^{\sigma _j})'$,
$\Sigma _{s_1,s_2}^{\sigma _1,\sigma _2}$ and $(\Sigma _{s_1,s_2}^{\sigma _1,\sigma _2})'$
in place of
$\maclS _{t_j}^{s_j}$, $(\maclS _{s_j}^{\sigma _j})'$,
$\maclS _{s_1,s_2}^{\sigma _1,\sigma _2}$ and $(\maclS _{s_1,s_2}^{\sigma _1,\sigma _2})'$,
respectively, at each occurrence.
\end{thm}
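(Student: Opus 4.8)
The plan is to mimic the structure of the classical proof of Theorem~\ref{Thm:SchwartzTensor}, but to replace the ad hoc Schwartz-space estimates by the Hermite series characterizations of the Gelfand-Shilov and Pilipovi\'c spaces, following the approach indicated in \cite{LozPer,ReSi}. The key observation is that both the regularity statement for $\psi _j$ and the existence and uniqueness of $f$ become transparent once one passes to the coefficient side, where the tensor product $\maclS _{s_1}^{\sigma _1}\otimes \maclS _{s_2}^{\sigma _2}$ is realized as a tensor product of weighted sequence spaces on which the Fubbini property is essentially the associativity of (absolutely convergent) double series.

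First I would establish the regularity $\psi _j\in \maclS _{s_j}^{\sigma _j}(\rr {d_2})$. Fix $f_1\in (\maclS _{s_1}^{\sigma _1})'(\rr {d_1})$ and $\fy \in \maclS _{s_1,s_2}^{\sigma _1,\sigma _2}(\rr {d_1+d_2})$, and consider $\psi _2(x_2)=\scal {f_1}{\fy (\cdo ,x_2)}$. The point is that the map $x_2\mapsto \fy (\cdo ,x_2)$ is a smooth $\maclS _{s_1}^{\sigma _1}(\rr {d_1})$-valued function of $x_2$ whose derivatives in $x_2$ satisfy Gelfand-Shilov-type bounds in $x_2$, uniformly in the relevant seminorms on the $x_1$ variable; applying the continuous functional $f_1$ then transfers these bounds to $\psi _2$. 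Concretely, writing $\fy$ in its Hermite expansion $\fy =\sum _{\alpha ,\beta }c_{\alpha ,\beta }\, h_\alpha \otimes h_\beta$, where $h_\alpha$ are the Hermite functions on $\rr {d_1}$ and $h_\beta$ those on $\rr {d_2}$, one gets $\psi _2=\sum _\beta \bigl(\sum _\alpha c_{\alpha ,\beta }\, \scal {f_1}{h_\alpha }\bigr) h_\beta$, and the coefficient bounds characterizing $\fy \in \maclS _{s_1,s_2}^{\sigma _1,\sigma _2}$ together with the polynomial (resp.\ subexponential) growth bounds on $\scal {f_1}{h_\alpha }$ coming from $f_1\in (\maclS _{s_1}^{\sigma _1})'$ yield exactly the coefficient decay that characterizes $\psi _2\in \maclS _{s_2}^{\sigma _2}$. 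The symmetric argument handles $\psi _1$.

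Next I would construct $f$. Define $\scal f\fy :=\scal {f_1}{\psi _1}=\scal {f_2}{\psi _2}$ for $\fy \in \maclS _{s_1,s_2}^{\sigma _1,\sigma _2}$; the two definitions agree because on the coefficient side both equal the absolutely convergent double sum $\sum _{\alpha ,\beta }c_{\alpha ,\beta }\, \scal {f_1}{h_\alpha }\, \scal {f_2}{h_\beta }$, and I may reorder and group this sum freely precisely by the absolute convergence guaranteed in the previous step. This simultaneously gives the Fubbini identity \eqref{Eq:TensorTensor2}. Continuity of $f$, i.e.\ $f\in (\maclS _{s_1,s_2}^{\sigma _1,\sigma _2})'$, follows from estimating $\abp {\scal f\fy}$ by the same bilinear sum, and \eqref{Eq:TensorTensor1} is the special case $\fy =\fy _1\otimes \fy _2$, whose coefficients factor as $c_{\alpha ,\beta }=a_\alpha b_\beta$. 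Uniqueness follows because $f$ is determined on the total set $\sets {\fy _1\otimes \fy _2}{\fy _j\in \maclS _{s_j}^{\sigma _j}}$ via \eqref{Eq:TensorTensor1}, which spans a dense subspace.

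I expect the main obstacle to be the justification of the absolute convergence and rearrangement of the double series $\sum _{\alpha ,\beta }c_{\alpha ,\beta }\scal {f_1}{h_\alpha }\scal {f_2}{h_\beta }$ in all four cases simultaneously, since the Roumieu-type spaces $\maclS _s^\sigma$ and the Beurling-type spaces $\Sigma _s^\sigma$ require quantifiers over the parameters $h>0$ (resp.\ $r>0$) that must be matched against the corresponding quantifiers in the dual norms; the Pilipovi\'c endpoint cases, where $s$ or $\sigma$ may be small and the Hermite coefficient characterization takes a more delicate subexponential form, are where the bookkeeping is heaviest. Once the matching of growth and decay parameters is set up correctly, the regularity of $\psi _j$, the coincidence of the two expressions for $\scal f\fy$, and the continuity of $f$ all reduce to a single convergence estimate, and the statement for the $\Sigma$-spaces follows by the same computation with the roles of the defining quantifiers interchanged.
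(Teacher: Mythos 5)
Your plan rests on characterizing $\maclS _{s}^{\sigma}$ and $\maclS _{s_1,s_2}^{\sigma _1,\sigma _2}$ by decay bounds on the moduli of Hermite coefficients, and this is where the argument breaks down. The Hermite functions are eigenfunctions of the Fourier transform, $\mascF h_\alpha =(-i)^{|\alpha |}h_\alpha$, so $|c_h(f,\alpha )|=|c_h(\widehat f,\alpha )|$ for every $f$; consequently any space defined by a bound on $|c_h(f,\alpha )|$ is automatically Fourier invariant. Since $\mascF$ maps $\maclS _s^\sigma (\rr d)$ onto $\maclS _\sigma ^s(\rr d)$ and these spaces differ whenever $s\neq \sigma$ (e.g.\ $s=1$, $\sigma =\tfrac 12$, which is admitted by the hypotheses $s_j,\sigma _j>0$), no characterization of $\maclS _s^\sigma$ by coefficient moduli can exist outside the Fourier-invariant case $s=\sigma$. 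The same objection applies to the claimed coefficient description of the dual $(\maclS _{s_1}^{\sigma _1})'$ via growth of $\scal {f_1}{h_\alpha }$, and to the deduction that $\psi _2\in \maclS _{s_2}^{\sigma _2}$ from coefficient decay alone. Your coefficient scheme is essentially the one the paper does use --- but for Theorem \ref{Thm:TensorprodDistr} on Pilipovi{\'c} spaces, which are \emph{defined} by such coefficient bounds; transplanting it to general Gelfand--Shilov spaces requires an input that is not available. (You locate the difficulty in the ``endpoint'' bookkeeping of quantifiers; the real obstruction is the asymmetry $s_j\neq \sigma _j$ throughout.)

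The paper's actual proof of Theorem \ref{Prop:GSTensor} avoids Hermite expansions entirely and adapts H{\"o}rmander's scheme from \cite{Ho1}: Lemma \ref{Lemma:A.1} shows that Riemann sums for $\fy *\psi$ converge in the $\maclS _{s_1,s_2}^{\sigma _1,\sigma _2}$ topology (using the pointwise exponential-decay characterization of these spaces, not coefficients), Lemma \ref{Lemma:A.2} deduces $(f*\fy )*\psi =f*(\fy *\psi )$, and Lemma \ref{Lemma:A.3} uses mollifiers $\phi _\ep =|\ep |^{-d}(\phi _1\otimes \phi _2)(\ep ^{-1}\cdo )$ to show that a distribution vanishing on all tensor products $\fy _1\otimes \fy _2$ is zero; existence then comes from observing that $\fy \mapsto \scal {f_1}{\psi _1}$ and $\fy \mapsto \scal {f_2}{\psi _2}$ are both continuous linear forms satisfying \eqref{Eq:TensorTensor1}, and Lemma \ref{Lemma:A.3} forces them to coincide, which is exactly \eqref{Eq:TensorTensor2}. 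Note also that your uniqueness step asserts without proof that the span of tensor products is dense --- that totality statement is precisely the content of Lemma \ref{Lemma:A.3} and is the part of the argument that genuinely requires work. If you want to salvage a series-based proof, you would need an expansion adapted to the anisotropic spaces (or restrict to $s_j=\sigma _j\ge \tfrac 12$, where Remark \ref{Remark:GSHermite} applies); otherwise the convolution route is the one that covers the full range of parameters.
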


\par

\begin{thm}\label{Thm:TensorprodDistr}
Let $s\in \overline{\mathbf R_\flat}$ and let $f_j\in \maclH _s'(\rr {d_j})$,
$\fy \in  \maclH _s(\rr {d_1+d_2})$
and let $\psi _j$ be given by \eqref{Eq:psijFunctions},
$j=1,2$. Then $\psi _j\in   \maclH _s(\rr {d_2})$, $j=1,2$, and
there is a unique $f\in \maclH _s'(\rr {d_1+d_2})$ such that
for every $\fy _1\in \maclH _s(\rr {d_1})$ and
$\fy _2\in \maclH _s(\rr {d_2})$, \eqref{Eq:TensorTensor1} and
\eqref{Eq:TensorTensor2} hold.

\par

The same holds true with $\maclH _{0,s}$ and $\maclH _{0,s}'$ in place of
$\maclH _s$ and $\maclH _s'$, respectively, at each occurrence.
\end{thm}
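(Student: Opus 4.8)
The plan is to reduce everything to statements about Hermite coefficient sequences, following the series-expansion philosophy indicated for $\mascS$ and $\mascS'$ in \cite{LozPer,ReSi}. Recall that the Hermite functions $\{h_\alpha\}_{\alpha\in\mathbf N^{d}}$ form an orthonormal basis of $L^2(\rr d)$, and that the tensor products $h_\alpha\otimes h_\beta$, $(\alpha,\beta)\in\mathbf N^{d_1}\times\mathbf N^{d_2}$, constitute precisely the Hermite basis of $L^2(\rr{d_1+d_2})$. The spaces $\maclH_s$ and $\maclH_s'$ (and likewise $\maclH_{0,s}$, $\maclH_{0,s}'$) admit characterizations purely in terms of their Hermite coefficients: membership of $\fy$ in the test space is equivalent to a decay condition on its coefficients, while membership of $f$ in the dual space is equivalent to the dual growth condition, the pairing being the (absolutely convergent) series of products of coefficients. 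First I would record these characterizations and the fact that, under the identification $h_{\alpha,\beta}=h_\alpha\otimes h_\beta$, the coefficient weight for $\rr{d_1+d_2}$ factors as the product of the weights for $\rr{d_1}$ and $\rr{d_2}$.

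Writing $f_j=\sum a^{(j)}_\gamma h_\gamma$ for the coefficients of $f_j\in\maclH_s'(\rr{d_j})$ and $\fy=\sum_{\alpha,\beta}c_{\alpha,\beta}\,h_\alpha\otimes h_\beta$, the natural candidate is the functional $f$ whose coefficients are the products $a^{(1)}_\alpha a^{(2)}_\beta$, so that $\scal f\fy=\sum_{\alpha,\beta}a^{(1)}_\alpha a^{(2)}_\beta c_{\alpha,\beta}$. The next step is to show that $\psi_1$ and $\psi_2$ lie in the test space. Expanding $\fy(x_1,\cdo)$ in the $h_\beta$, one finds that $\psi_1(x_1)=\scal{f_2}{\fy(x_1,\cdo)}$ has Hermite coefficients $\big(\sum_\beta a^{(2)}_\beta c_{\alpha,\beta}\big)_\alpha$, and symmetrically for $\psi_2$. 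Using the decay of $(c_{\alpha,\beta})$ against the admissible growth of $(a^{(2)}_\beta)$, one estimates these coefficients and verifies that they satisfy the required decay, giving $\psi_1\in\maclH_s(\rr{d_1})$ and $\psi_2\in\maclH_s(\rr{d_2})$; this is exactly the analogue, at the level of sequence spaces, of the corresponding step in Theorem \ref{Prop:GSTensor}.

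With these preliminaries, identity \eqref{Eq:TensorTensor1} is immediate: choosing $\fy=\fy_1\otimes\fy_2$ makes the double series factor, yielding $\scal{f_1}{\fy_1}\scal{f_2}{\fy_2}$. The Fubbini identity \eqref{Eq:TensorTensor2} then reduces to the interchange of summation order in an absolutely convergent double series,
\begin{equation*}
\sum_{\alpha,\beta}a^{(1)}_\alpha a^{(2)}_\beta c_{\alpha,\beta}
=\sum_\alpha a^{(1)}_\alpha\Big(\sum_\beta a^{(2)}_\beta c_{\alpha,\beta}\Big)
=\sum_\beta a^{(2)}_\beta\Big(\sum_\alpha a^{(1)}_\alpha c_{\alpha,\beta}\Big),
\end{equation*}
the inner sums being the coefficients of $\psi_1$ and $\psi_2$. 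Continuity of $f$ and its uniqueness follow from the coefficient estimates together with the density of finite sums of elementary tensors $\fy_1\otimes\fy_2$ in $\maclH_s(\rr{d_1+d_2})$. The passage to $\maclH_{0,s}$, $\maclH_{0,s}'$ is obtained by replacing the Roumieu-type quantifiers by the Beurling-type ones throughout.

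The main obstacle I anticipate is the absolute-convergence estimate underlying both the well-definedness of $\psi_j$ and the Fubbini rearrangement, since one must control a double series in which one factor only decays while the other is merely of the dual (growth) type, and the estimate must be uniform enough to survive the interchange. A second, genuinely Pilipovi{\'c}, difficulty is that for small $s$ the space $\maclH_s$ is strictly smaller than every Gelfand-Shilov space, so one cannot simply invoke Theorem \ref{Prop:GSTensor}: the weight sequences must be handled directly, and the extremal cases $s=\flat_\sigma$, where the coefficient weights are of factorial rather than exponential-power type, require a separate but parallel verification.
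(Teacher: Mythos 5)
Your proposal is correct and follows essentially the same route as the paper: both reduce the theorem to Hermite coefficient sequences, take as candidate the distribution whose coefficients are the products $c_{\alpha _1}(f_1)c_{\alpha _2}(f_2)$, verify the Roumieu/Beurling growth conditions separately for $s\in \mathbf R_+$ and $s=\flat _\sigma$, and obtain \eqref{Eq:TensorTensor1} and \eqref{Eq:TensorTensor2} from the absolute convergence of the resulting double series (the paper phrases this as density of $\maclH _0$ plus dominated convergence, and proves uniqueness by testing against $h_{\alpha _1}\otimes h_{\alpha _2}$, which is the same as your density-of-elementary-tensors argument).
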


\par

The distribution $f$ in Theorem \ref{Thm:SchwartzTensor}, Theorem
\ref{Prop:GSTensor} or in Theorem \ref{Thm:TensorprodDistr} is called
the \emph{tensor product} of $f_1$ and $f_2$ and is denoted by $f_1\otimes f_2$
as before.

\par

We remark that Gelfand-Shilov spaces of functions and distributions appear naturally when
discussing analyticity and well-posedness of solutions to partial differential equations (cf.
\cite {CaRo,CaTo}). Pilipovi{\'c}
spaces of functions and distributions often agree with Fourier-invariant Gelfand-Shilov spaces,
and possess convenient mapping properties with respect to the Bargmann transform. They therefore
seems to be suitable to have in background on problems in partial differential equations
which have been transformed by the Bargmann transform (see \cite{FeGaTo,Toft15} for
more details).

\par

Since the spaces in Theorems \ref{Prop:GSTensor} and \ref{Thm:TensorprodDistr}
are unions and intersections of nuclear spaces, the existence of $f$ satisfying
\eqref{Eq:TensorTensor1}
may be deduced by the abstract analogous results in \cite{Tre}.
Some parts of Theorem \ref{Prop:GSTensor} are also proved in
\cite{LozPer}.

\par

In Section \ref{sec2} we give a proof of Theorem \ref{Prop:GSTensor},
by using the framework in \cite{Ho1} for the proof of
Theorem \ref{Thm:SchwartzTensor}. In Section \ref{sec3} we use that
Pilipovi{\'c} spaces and their distribution spaces can be described by unions
and intersections of Hilbert spaces of Hermite series expansions. In similar
ways as in \cite{ReSi}, this essentially reduce the situation to deal with questions
on tensor products of weighted $\ell ^2$ spaces.

\par

In the end of Section \ref{sec2} we also give example on how to apply the Fubbini
property \eqref{Eq:TensorTensor2} to deduce certain relations for short-time Fourier
transforms (which often called coherent state trasnform in physics) of Gelfand-Shilov
distributions (see Example \ref{Ex:STFTforGS}). In Section \ref{sec3} we also discuss
such questions for Pilipovi{\'c} spaces which are not Gelfand-Shilov distributions
(cf. Remark \ref{Rem:STFTforPilSpaces}).

\par

\section{Preliminaries}\label{sec1}

\par

In this section we recall some basic facts. We start by giving
the definition of Gelfand-Shilov spaces. Thereafter we recall
some the definition of Pilipovi{\'c} spaces and recall some of their properties.

\par

\subsection{Gelfand-Shilov spaces} 

\par

We start by recalling some facts about Gelfand-Shilov spaces (cf. \cite{GS}).
Let $0<h,s_j,\sigma _j\in \mathbf R$, $j=1,\dots ,n$, be fixed,
$d=d_1+\cdots d_n$, where $d_j\ge 0$ are integers, and let
$$
\mabfs =(s_1,\dots ,s_n)\in \rr n_+
\quad \text{and}\quad
\mabfsi =(\sigma _1,\dots ,\sigma _n)\in \rr n_+.
$$
For multi-indices of multi-indices we let
\begin{alignat*}{2}
\alpha !^{\mabfs} &= \alpha _1!^{s_1}\cdots \alpha _n!^{s_n}, &
\quad
x^\alpha &=x^{\alpha _1}\cdots x^{\alpha _n},
\\[1ex]
D_x^\alpha &= D_{x_1}^{\alpha _1}\cdots D_{x_n}^{\alpha _n} &
\qquad \text{and}\qquad
|\alpha | &= |\alpha _1|+\cdots +|\alpha _n|
\end{alignat*}
when
$$
\alpha = (\alpha _1,\dots ,\alpha _n)\in \nn {d_1}\times \cdots \times \nn {d_n},
\quad \text{and}\quad
x=(x_1,\dots ,x_n)\in \rr {d_1}\times \cdots \times \rr {d_n}.
$$
For any $f\in C^\infty (\rr d)$, we let
\begin{equation}\label{gfseminorm}
\nm f{\maclS _{\mabfs ;h}^{\mabfsi}}
\\[1ex]
\equiv
\sup
\left (
\frac {\nm {x^{\alpha}
\partial _{x}^\beta f}{L^\infty (\rr d)}}
{
h^{|\alpha  |+|\beta |}
\alpha !^{\mabfs}\, \beta !^{\mabfsi}
}
\right ),
\end{equation}
where the supremum is taken over all $\alpha _j,\beta _j\in \nn {d_j}$, $j=1,\dots ,d$.
Then $f\mapsto \nm f{\maclS _{\mabfs ;h}^{\mabfsi}}$ defines a norm on $C^\infty (\rr d)$
which might attend infinity.
The space
$\maclS _{\mabfs ;h}^{\mabfsi}(\rr {d})$ is
the Banach space which consist of all $f\in C^\infty (\rr {d})$ such that
$\nm f{\maclS _{\mabfs ;h}^{\mabfsi}}$ is finite.
In the case $d_1=d\ge 1$,
$d_2=\cdots =d_n=0$, $s=s_1$, $\sigma =\sigma _1$ and $x_1=x$, \eqref{gfseminorm}
is interpreted as
\begin{equation}\tag*{( \ref{gfseminorm} )$'$}
\nm f{\maclS _{s;h}^{\sigma}}
\equiv
\sup _{\alpha ,\beta \in
\mathbf N^{d}} 
\left (
\frac {\nm {x^{\alpha}
\partial _x^{\beta} f(x)}{L^\infty (\rr d)}}
{h^{|\alpha  +\beta |}
\alpha !^{s}\, \beta !^{\sigma}}
\right ).
\end{equation}

\par

The \emph{Gelfand-Shilov spaces} $\maclS _{\mabfs}^{\mabfsi}
(\rr {d})$ and $\Sigma _{\mabfs}^{\mabfsi} (\rr {d})$
are defined as the inductive and projective 
limits respectively of $\maclS _{\mabfs ;h}^{\mabfsi}(\rr {d})$.
This implies that
\begin{equation}\label{GSspacecond1}
\begin{aligned}
\maclS _{\mabfs}^{\mabfsi} (\rr {d})
&=
\bigcup _{h>0}
\maclS _{\mabfs ;h}^{\mabfsi} (\rr {d}),
\\[1ex]
\Sigma _{\mabfs}^{\mabfsi} (\rr {d})
&=
\bigcap _{h>0}
\maclS _{\mabfs ;h}^{\mabfsi} (\rr {d}),
\end{aligned}
\end{equation}
and that the topology for $\maclS _{\mabfs}^{\mabfsi} (\rr {d})$
is the strongest
possible one such that the inclusion map from $\maclS _{\mabfs ;h}^{\mabfsi}
(\rr {d})$ to $\maclS _{\mabfs}^{\mabfsi} (\rr {d})$
is continuous, for every choice 
of $h>0$. The space $\Sigma _{\mabfs}^{\mabfsi} (\rr {d})$ is a Fr{\'e}chet space
with seminorms $\nm \cdo {\maclS _{\mabfs ;h}^{\mabfsi}}$,
$h>0$. Moreover,
\begin{alignat*}{3}
\Sigma _{\mabfs}^{\mabfsi} (\rr {d})&\neq \{ 0\} & \quad
&\Leftrightarrow & \quad
s_j+\sigma _j &\ge 1\ \text{and}\ (s_j,\sigma _j)\neq (\frac 12,\frac 12),\ j=1,\dots ,n,
\intertext{and}
\maclS _{\mabfs}^{\mabfsi} (\rr {d})&\neq \{ 0\} & \quad
&\Leftrightarrow & \quad
s_j+\sigma _j &\ge 1,\ j=1,\dots ,n.
\end{alignat*}

\par

There are various kinds of characterisations of the spaces
$\maclS _{\mabfs}^{\mabfsi} (\rr {d})$ and
$\Sigma _{\mabfs}^{\mabfsi} (\rr {d})$, e.{\,}g.
in terms of the exponential decay of their elements. Later on it will
be useful that
$f \in \maclS _{\mabfs}^{\mabfsi} (\rr {d})$
(respectively $f \in \Sigma _{\mabfs}^{\mabfsi} (\rr {d})$), if and only if 
$$
|\partial _x^{\alpha}f(x)|
\lesssim
h^{|\alpha |} \alpha !^{\mabfsi} 
e^{-r(|x_1|^{\frac 1{s_1}} + \cdots +|x_n|^{\frac 1{s_n}})}
$$
for some $h,r>0$ (respectively for every $h>0, \ep>0$). 

\par

If $\boldsymbol 1=(1,\dots ,1)\in \rr n$ and $\mabfs ,\mabfsi \in \rr n_+$, then
\begin{equation}\label{Eq:GSEmbeddings}
\Sigma _{\mabfs}^{\mabfsi} (\rr {d})
\hookrightarrow
\maclS _{\mabfs}^{\mabfsi} (\rr {d})
\hookrightarrow
\Sigma _{\mabfs +\ep \boldsymbol 1}^{\mabfsi +\ep \boldsymbol 1} (\rr {d})
\hookrightarrow
\mascS (\rr {d})
\end{equation}
for every $\ep >0$. If in addition $s_j+\sigma _j\ge 1$ for every $j$, then
the last two inclusions in \eqref{Eq:GSEmbeddings} are dense,
and if in addition $(s_j,\sigma _j)\neq (\frac 12,\frac 12)$ for every $j$, then
the first inclusion in \eqref{Eq:GSEmbeddings} is dense.

\medspace

The \emph{Gelfand-Shilov distribution spaces} $(\maclS
_{\mabfs}^{\mabfsi} (\rr {d})$
and $(\Sigma _{\mabfs}^{\mabfsi} (\rr {d})$ are the
projective and inductive limit respectively of $(\maclS
_{\mabfs}^{\mabfsi})'(\rr {d})$.  This means that
\begin{equation}\tag*{(\ref{GSspacecond1})$'$}
\begin{aligned}
(\maclS _{\mabfs}^{\mabfsi} )'(\rr {d})
= \bigcap _{h>0}(\maclS _{\mabfs ;h}^{\mabfsi})'(\rr {d}),
\\[1ex]
(\Sigma _{\mabfs}^{\mabfsi} )'(\rr {d})
=\bigcup _{h>0}(\maclS _{\mabfs ;h}^{\mabfsi} )'(\rr {d}).
\end{aligned}
\end{equation}
If in addition $d_1=d\ge 1$, $d_2=\cdots =d_n=0$, $s=s_1$ and $\sigma _1=\sigma$,
then we set $(\maclS _s^\sigma )'(\rr d) =(\maclS _{\mabfs}^{\mabfsi})'
(\rr {d})$ and
$(\Sigma _s^\sigma )'(\rr d) =(\Sigma _{\mabfs}^{\mabfsi} )'
(\rr {d})$.
We remark that the analysis in \cite{Pil2} shows that $(\maclS
_{\mabfs}^{\mabfsi} )'(\rr {d})$
is the dual of $\maclS _{\mabfs}^{\mabfsi} (\rr {d})$, and
that $(\Sigma _{\mabfs}^{\mabfsi} )'(\rr {d})$
is the dual of $\Sigma _{\mabfs}^{\mabfsi} (\rr {d})$
(also in topological sense). By the inequalities $n!k! \le (n+k)! \le 2^{n+k}n!k!$
it follows that
\begin{alignat*}{2}
\maclS _{s,\dots ,s}^{\sigma ,\dots ,\sigma} (\rr {d})
&=
\maclS _{s}^{\sigma} (\rr {d}), &
\quad
\Sigma _{s,\dots ,s}^{\sigma ,\dots ,\sigma} (\rr {d})
&=
\Sigma _{s}^{\sigma} (\rr {d}),
\\[1ex]
(\maclS _{s,\dots ,s}^{\sigma ,\dots ,\sigma} )' (\rr {d})
&=
(\maclS _{s}^{\sigma})' (\rr {d_1+d_2}), &
\quad
(\Sigma _{s,\dots ,s}^{\sigma ,\dots ,\sigma} )' (\rr {d})
&=
(\Sigma _{s}^{\sigma})' (\rr {d_1+d_2}),
\end{alignat*}

\par

Corresponding relations in \eqref{Eq:GSEmbeddings} for 
Gelfand-Shilov distributions are
\begin{align*}
\mascS '(\rr {d}) &\hookrightarrow 
(\Sigma _{\mabfs +\ep \boldsymbol 1}^{\mabfsi +\ep \boldsymbol 1} )'(\rr {d})
\hookrightarrow
(\maclS _{\mabfs}^{\mabfsi} )'(\rr {d})
\intertext{when $s_j+\sigma _j \ge 1$, $j=1,\dots ,n$, and}
(\maclS _{\mabfs}^{\mabfsi} )'(\rr {d})
&\hookrightarrow (\Sigma _{\mabfs}^{\mabfsi} )'(\rr {d})
\end{align*}
when $s_j+\sigma _j \ge 1$ and $(s_j,\sigma _j)\neq (\frac 12,\frac 12)$,
$j=1,\dots ,n$.

\par

The Gelfand-Shilov spaces possess several convenient mapping
properties. For example they are invariant under
translations, dilations, and to some extent (partial) Fourier transformations. For any
$f\in L^1(\rr d)$, its Fourier transform is defined by
$$
(\mascF f)(\xi ) = \widehat f(\xi ) \equiv (2\pi )^{-\frac d2}\int _{\rr {d}} f(x)e^{-i\scal x\xi}\, dx.
$$
If instead $f\in L^1(\rr {d_1+\cdots +d_n})$, then the partial Fourier transform of $f$ with respect to
$k\in \{ 1,\dots ,n\}$ is given by
$$
(\mascF _{k}f)(x_1,\dots ,\xi _{k},\dots ,x_n) \equiv (2\pi )^{-\frac {d_{k}}2}\int _{\rr {d_{k}}}
f(x_1,\dots ,x_n)e^{-i\scal {x_{k}}{\xi _{k}}}\, dx_{k}, \quad x_j,\xi _j\in \rr {d_j}.
$$

\par

\begin{rem}\label{Rem:FTExtentions}
Let $d=d_1+\cdots +d_n$, $j\in \{ 1,\dots ,n\}$, $\mabfs ,\mabfsi \in \rr n_+$,
$\tau _{k,j}(\mabfs ,\mabfsi ) = (r_{k,j,1},\dots ,r_{k,j,n})$, $k=1,2$,
where
$$
r_{1,j,l}
=
\begin{cases}
s_l, & l\neq j
\\[1ex]
\sigma _l, & l=j,
\end{cases}
\quad \text{and} \quad
r_{2,j,l}
=
\begin{cases}
\sigma _l, & l\neq j
\\[1ex]
s _l, & l=j.
\end{cases}
$$
Then the following follows from the general theory of Schwartz functions and Gelfand-Shilov
functions and their distributions (see e.{\,}g. \cite{ChuChuKim,Ho1}):
\begin{enumerate}
\item the definition of $\mascF _j$ extends to a homeomorphism on
$\mascS '(\rr d)$ and restricts to a homeomorphism on $\mascS (\rr d)$;

\vrum

\item the definition of $\mascF _j$ extends uniquely to a homeomorphism
from $(\maclS _{\mabfs}^{\mabfsi})'(\rr d)$
to $(\maclS _{\tau _{1,j}(\mabfs ,\mabfsi )}^{\tau _{2,j}(\mabfs ,\mabfsi )})'(\rr d)$, and
from $(\Sigma _{\mabfs}^{\mabfsi})'(\rr d)$ to
$(\Sigma _{\tau _{1,j}(\mabfs ,\mabfsi )}^{\tau _{2,j}(\mabfs ,\mabfsi )})'(\rr d)$;

\vrum

\item $\mascF _j$ restricts to homeomorphisms from $\maclS _{\mabfs}^{\mabfsi}(\rr d)$
to $\maclS _{\tau _{1,j}(\mabfs ,\mabfsi )}^{\tau _{2,j}(\mabfs ,\mabfsi )}(\rr d)$, and from
$\Sigma _{\mabfs}^{\mabfsi}(\rr d)$
to $\Sigma _{\tau _{1,j}(\mabfs ,\mabfsi )}^{\tau _{2,j}(\mabfs ,\mabfsi )}(\rr d)$.
\end{enumerate}
\end{rem}

\par

\subsection{Pilipovi{\'c} spaces} 

\par

Next we make a review of Pilipovi{\'c} spaces. These spaces can be defined
in terms of Hermite series expansions. We recall that
the Hermite function of order $\alpha \in \nn d$ is defined by
$$
h_\alpha (x) = \pi ^{-\frac d4}(-1)^{|\alpha |}
(2^{|\alpha |}\alpha !)^{-\frac 12}e^{\frac {|x|^2}2}
(\partial ^\alpha e^{-|x|^2}).
$$
It follows that
$$
h_{\alpha}(x)=   ( (2\pi )^{\frac d2} \alpha ! )^{-1}
e^{-\frac {|x|^2}2}p_{\alpha}(x),
$$
for some polynomial $p_\alpha$ on $\rr d$, which is
called the Hermite polynomial of order $\alpha$. 
The Hermite functions are eigenfunctions to the Fourier transform, and
to the Harmonic oscillator $H_d\equiv |x|^2-\Delta$ which acts on functions
and (ultra-)distributions defined
on $\rr d$. More precisely, we have
$$
H_dh_\alpha = (2|\alpha |+d)h_\alpha ,\qquad H_d\equiv |x|^2-\Delta .
$$

\par

It is well-known that
the set of Hermite functions is a basis for $\mascS (\rr d)$ 
and an orthonormal basis for $L^2(\rr d)$ (cf. \cite{ReSi}).
In particular, if $f\in L^2(\rr d)$, then
$$
\nm f{L^2(\rr d)}^2 = \sum _{\alpha \in \nn d}|c_h(f,\alpha )|^2,
$$
where
\begin{align}
f(x) &= \sum _{\alpha \in \nn d}c_h(f,\alpha )h_\alpha ,
\label{Eq:HermiteExp}
\intertext{is the Hermite seriers expansion of $f$, and}
c_h(f,\alpha ) &= (f,h_\alpha )_{L^2(\rr d)}
\label{Eq:HermiteCoeff}
\end{align}
is the Hermite coefficient of $f$ of order $\alpha \in \rr d$.

\par

In order to define the full
scale of Pilipovi{\'c} spaces, their order $s$ should belong to
the extended set
$$
\mathbf R_\flat = \mathbf R_+\bigcup \sets {\flat _\sigma}{\sigma \in \mathbf R_+},
$$
of $\mathbf R_+$, with extended inequality relations as
$$
s_1<\flat _\sigma <s_2
\quad \text{and}\quad \flat _{\sigma _1}<\flat _{\sigma _2}
$$
when $s_1<\frac 12\le s_2$ and $\sigma _1<\sigma _2$. (Cf. \cite{Toft15}.)

\par

For $r>0$ and $s\in \mathbf R_\flat$ we set
\begin{align}
\vartheta _{r,s}(\alpha ) &\equiv
\begin{cases}
e^{-r|\alpha |^{\frac 1{2s}}}, &  s\in \mathbf R_+ ,
\\[1ex]
r^{|\alpha |}\alpha !^{-\frac 1{2\sigma }}, &  s = \flat _\sigma ,
\end{cases}
\label{varthetarsDef}
\intertext{and}
\vartheta _{r,s}'(\alpha ) &\equiv
\begin{cases}
e^{r|\alpha |^{\frac 1{2s}}}, &  s\in \mathbf R_+ ,
\\[1ex]
r^{|\alpha |}\alpha !^{\frac 1{2\sigma }}, &  s = \flat _\sigma .
\end{cases}
\label{varthetarsDualDef}
\end{align}

\par

\begin{defn}\label{Def:PilSpaces}
Let $s\in \overline{\mathbf R_\flat} = \mathbf R_\flat \cup \{ 0\}$,
and let $\vartheta _{r,s}$ and $\vartheta _{r,s}'$ be as in
\eqref{varthetarsDef} and \eqref{varthetarsDualDef}.
\begin{enumerate}
\item $\maclH _0(\rr d)$ consists of all Hermite polynomials, and
$\maclH _0'(\rr d)$ consists of all formal Hermite series expansions
in \eqref{Eq:HermiteExp};

\vrum

\item if $s\in \mathbf R_\flat$, then $\maclH _s(\rr d)$ ($\maclH _s^0(\rr d)$)
consists of all $f\in L^2(\rr d)$ such that
$$
|c_h(f,h_\alpha )| \lesssim \vartheta _{r,s}(\alpha )
$$
holds true for some $r\in \mathbf R_+$ (for every $r\in \mathbf R_+$);

\vrum

\item if $s\in \mathbf R_\flat$, then $\maclH _s'(\rr d)$ ($(\maclH _s^0)'(\rr d)$)
consists of all formal Hermite series expansions
in \eqref{Eq:HermiteExp} such that 
$$
|c_h(f,h_\alpha )| \lesssim \vartheta _{r,s}'(\alpha )
$$
holds true for every $r\in \mathbf R_+$ (for some $r\in \mathbf R_+$).
\end{enumerate}
The spaces $\maclH _s(\rr d)$ and $\maclH _s^0(\rr d)$ are called
\emph{Pilipovi{\'c} spaces of Roumieu respectively Beurling types} of order $s$,
and
$\maclH _s'(\rr d)$ and $(\maclH _s^0)'(\rr d)$ are called
\emph{Pilipovi{\'c} distribution spaces of Roumieu respectively Beurling types}
of order $s$.
\end{defn}

\par

\begin{rem}\label{Remark:GSHermite}
Let $\mascS _s(\rr d)$ and $\Sigma _s(\rr d)$
be the Fourier invariant Gelfand-Shilov spaces of order $s\in \mathbf R_+$
and of Rourmeu and Beurling types respectively (see \cite{Toft15} for notations).
Then it is proved in \cite{Pil1,Pil2} that
\begin{alignat*}{2}
\maclH _s^0(\rr d) &=\Sigma _s(\rr d)\neq \{ 0\} ,& \quad s&> \frac 12,
\\[1ex]
\maclH _s^0(\rr d) &\neq\Sigma _s(\rr d) = \{ 0\} ,& \ s&\le \frac 12,
\\[1ex]
\maclH _s(\rr d) &=\maclS _s(\rr d)\neq \{ 0\} ,& \quad s&\ge \frac 12
\intertext{and}
\maclH _s(\rr d) &\neq \maclS _s(\rr d)= \{ 0\} ,& \quad s&< \frac 12.
\end{alignat*}
\end{rem}

\par


\par

Next we recall the topologies for Pilipovi{\'c} spaces. Let $s\in \mathbf R_\flat$,
$r>0$, and let $\nm f{\maclH _{s;r}}$ and $\nm f{\maclH _{s;r}'}$ be given by
\begin{alignat}{2}
\nm f{\maclH _{s;r}}
&\equiv
\sup _{\alpha \in \nn d} |c_h(f,\alpha )/\vartheta _{r,s}(\alpha )| ,&
\quad 
s&\in \mathbf R_\flat ,
\label{Eq:PilSpaceSemiNorm}
\intertext{and}
\nm f{\maclH _{s;r}'}
&\equiv
\sup _{\alpha \in \nn d} |c_h(f,\alpha )/\vartheta _{r,s}'(\alpha )| ,&
\quad 
s&\in \mathbf R_\flat .
\label{Eq:PilDistSpaceSemiNorm}
\end{alignat}
when $f$ is a formal expansion in \eqref{Eq:HermiteExp}.
Then $\maclH _{s;r}(\rr d)$ consists of all expansions \eqref{Eq:HermiteExp}
such that $\nm f{\maclH _{s;r}}$ is finite, and $\maclH _{s;r}'(\rr d)$
consists of all expansions \eqref{Eq:HermiteExp}
such that $\nm f{\maclH _{s;r}'}$ is finite. It follows that both
$\maclH _{s;r}(\rr d)$ and $\maclH _{s;r}'(\rr d)$ are Banach spaces under
the norms $f\mapsto \nm f{\maclH _{s;r}}$ and $f\mapsto \nm f{\maclH _{s;r}'}$,
respectively.

\par

We let the topologies of $\maclH _s(\rr d)$ and $\maclH _s^0(\rr d)$
be the inductive respectively projective limit topology of $\maclH _{s;r}(\rr d)$
with respect to $r>0$. In the same way, the topologies of $\maclH _s'(\rr d)$
and $(\maclH _s^0)'(\rr d)$ are
the projective respectively inductive limit topology of $\maclH _{s;r}'(\rr d)$
with respect to $r>0$.

Suppose instead $s=0$. For any integer $N\ge 0$, we set
$$
\nm f{(0,N)} \equiv \sup _{|\alpha \le N|} |c_\alpha (f)|,\qquad f\in \maclH _0'(\rr d).
$$
The topology for $\maclH _0'(\rr d)$ is defined by the semi-norms
$\nm \cdo{(0,N)}$.

\par

We also let $\maclH _{0,N}(\rr d)$ be the vector space which consists of all
$f\in \maclH _0'(\rr d)$ such that $c_\alpha (f)=0$ when $|\alpha |>N$, and
equip this space with the topology, defined by the norm $\nm \cdo{(0,N)}$.
The topology of $\maclH _0(\rr d)$ is then defined as the inductive limit topology
of $\maclH _{0,N}(\rr d)$ with respect to $N\ge 0$.

\par

It follows that all the spaces in Definition
\ref{Def:PilSpaces} are complete, and that $\maclH _s^0(\rr d)$
and $\maclH _s'(\rr d)$ are Fr{\'e}chet space with semi-norms
$f\mapsto \nm f{\maclH _{s;r}}$ and $f\mapsto \nm f{\maclH _{s;r}'}$,
respectively.

\par

The following characterisations of Pilipovi{\'c} spaces can be found in \cite{Toft15}.
The proof is therefore omitted.

\par

\begin{prop}\label{Prop:PilSpaceChar}
Let $s\in \mathbf R_+\cup \{ 0\}$ and let $f\in \maclH _0'(\rr d)$.
Then $f\in \maclH _s^0(\rr d)$ ($f\in \maclH _s(\rr d)$), if and only if
$f\in C^\infty (\rr d)$ and
satisfies $|H_d^N f(x)|\lesssim h^N N!^{2s}$ for every $h>0$ (for some $h>0$).
\end{prop}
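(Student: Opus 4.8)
The plan is to reduce everything to the eigenvalue identity $H_d h_\alpha = (2|\alpha |+d)h_\alpha$ together with two elementary facts about Hermite functions: the uniform bound $\nm{h_\alpha}{L^\infty(\rr d)}\lesssim 1$ and the polynomial bound $\nm{h_\alpha}{L^1(\rr d)}\lesssim \eabs{\alpha}^M$ for some $M=M(d)$, both of which follow from the one-dimensional asymptotics of Hermite functions and the product structure $h_\alpha=\prod_j h_{\alpha_j}$. Writing $f=\sum_\alpha c_h(f,\alpha )h_\alpha$ as a formal expansion, the eigenvalue identity gives $c_h(H_d^Nf,\alpha )=(2|\alpha |+d)^Nc_h(f,\alpha )$ for every $N$, which is the bridge between the growth of $H_d^Nf$ and the decay of the coefficients. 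The whole proof then rests on the Stirling-type estimate
\begin{equation*}
\sup _{t\ge 0}t^Ne^{-ct^{1/(2s)}}\asymp C_c^N\, N!^{2s},
\end{equation*}
valid for $s>0$ and $c>0$, where $C_c$ is decreasing in $c$; this is what converts the weights $\vartheta _{r,s}$ into the factorial bound $h^NN!^{2s}$ and back (the unavoidable polynomial-in-$N$ corrections are harmless, since they may be absorbed into the exponential base).

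For the direction from coefficient decay to $H_d$-bounds, I would assume $|c_h(f,\alpha )|\lesssim e^{-r|\alpha |^{1/(2s)}}$ and estimate, uniformly in $x$,
\begin{equation*}
|H_d^Nf(x)|\le \sum _{\alpha \in \nn d}|c_h(f,\alpha )|(2|\alpha |+d)^N\nm{h_\alpha}{L^\infty(\rr d)}
\lesssim \sum _{k\ge 0}\eabs{k}^{d-1}(2k+d)^Ne^{-rk^{1/(2s)}},
\end{equation*}
after grouping the multi-indices with $|\alpha |=k$ (there are $\lesssim \eabs{k}^{d-1}$ of them). Splitting $e^{-rk^{1/(2s)}}=e^{-\frac r2k^{1/(2s)}}e^{-\frac r2k^{1/(2s)}}$, the first factor makes the $k$-sum a convergent constant independent of $N$, while the supremum of $(2k+d)^Ne^{-\frac r2 k^{1/(2s)}}$ is bounded by $C^NN!^{2s}$ via the displayed estimate. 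This yields $|H_d^Nf(x)|\lesssim h^NN!^{2s}$ with $h$ depending on $r$ (and $h\to 0$ as $r\to\infty$); the same rapid decay shows the series converges in $\mascS (\rr d)$, so $f\in C^\infty (\rr d)$. Tracking the quantifiers, a single $r$ produces a single $h$ (Roumieu, $\maclH _s$), whereas all $r$ produce all $h$ (Beurling, $\maclH _s^0$), since given any target $h$ one chooses $r$ large.

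For the converse I would start from $|H_d^Nf(x)|\lesssim h^NN!^{2s}$ and pair against $h_\alpha$ in the $L^\infty$--$L^1$ duality, using $c_h(H_d^Nf,\alpha )=(H_d^Nf,h_\alpha )_{L^2(\rr d)}$ together with the eigenvalue identity, to get
\begin{equation*}
(2|\alpha |+d)^N|c_h(f,\alpha )|\le \nm{H_d^Nf}{L^\infty(\rr d)}\nm{h_\alpha}{L^1(\rr d)}
\lesssim h^NN!^{2s}\eabs{\alpha}^M,
\end{equation*}
valid for every $N$. Optimising over $N$ gives $\inf _N\lambda ^{-N}h^NN!^{2s}\lesssim e^{-c_h\lambda ^{1/(2s)}}$ with $\lambda =2|\alpha |+d$ and $c_h$ increasing as $h$ decreases, so $|c_h(f,\alpha )|\lesssim \eabs{\alpha}^Me^{-c_h(2|\alpha |+d)^{1/(2s)}}$; absorbing the polynomial factor into a slightly smaller exponent gives $|c_h(f,\alpha )|\lesssim e^{-r|\alpha |^{1/(2s)}}$, which is exactly membership in $\maclH _s(\rr d)$ for a suitable $r$. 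The quantifiers again match: a single $h$ yields a single $r$, and all $h$ yield all $r$. The degenerate case $s=0$ (where $N!^{2s}=1$) is handled directly: the bound $|H_d^Nf|\lesssim h^N$ for some $h$ forces $c_h(f,\alpha )=0$ once $2|\alpha |+d>h$, i.e. $f$ is a Hermite polynomial, while the version for every $h$ forces $f=0$.

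The step I expect to be the main obstacle is the Stirling optimisation in both directions, namely making the asymptotic identities precise enough to (i) absorb the dimensional polynomial factors $\eabs{k}^{d-1}$ and $\eabs{\alpha}^M$ without degrading the exponent, and (ii) track how the constants $C_c$ and $c_h$ depend on the parameters so that the Roumieu/Beurling quantifier matching is exact. A secondary technical point is the consistency between the formal Hermite coefficients of $H_d^Nf\in \maclH _0'(\rr d)$ and the genuine integral pairings $(H_d^Nf,h_\alpha )_{L^2(\rr d)}$, which must be justified before the $L^\infty$--$L^1$ estimate can be applied.
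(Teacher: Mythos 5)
The paper does not prove this proposition itself but refers to \cite{Toft15}; the argument there (going back to Pilipovi{\'c}'s work) is precisely the Hermite-expansion argument you give, namely the eigenvalue identity $H_dh_\alpha =(2|\alpha |+d)h_\alpha$, uniform $L^\infty$ and polynomial $L^1$ bounds for $h_\alpha$, and a Stirling optimisation converting the weights $e^{-r|\alpha |^{1/(2s)}}$ into $h^NN!^{2s}$ and back. Your proposal is correct, including the Roumieu/Beurling quantifier bookkeeping and the degenerate case $s=0$; the two points you flag (absorbing the polynomial factors into the exponential base, and identifying the formal coefficients of $f\in \maclH _0'(\rr d)$ with the pairings $\scal f{h_\alpha}$ of $f$ viewed as a bounded tempered distribution, so that $\scal {H_d^Nf}{h_\alpha}=(2|\alpha |+d)^Nc_h(f,\alpha )$ by formal self-adjointness of $H_d$) are indeed the only places requiring care, and both are routine.
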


\par

Finally we remark that the Pilipovi{\'c} spaces of functions and distributions
possess convenient mapping properties under the Bargmann transform
(cf. \cite{Toft15}).

\par


%

\par

\section{Tensor product for Gelfand-Shilov
spaces}\label{sec2}

\par

In this section we start by proving Theorem \ref{Prop:GSTensor}.
Thereafter we deduce a multi-linear version of this result.

\par

For the proof of Theorem \ref{Prop:GSTensor} we first need the following
analogy of Lemma 4.1.3 in \cite{Ho1}.

\par

\begin{lemma}\label{Lemma:A.1}
Let $s_1,s_2,\sigma _1,\sigma _2>0$, $\fy ,\psi \in \maclS
_{s_1,s_2}^{\sigma _1,\sigma _2}(\rr {d_1+d_2})$.
Then the Riemann sum
$$
\sum _{k\in \zz d}\fy (x-\ep k)\psi (\ep k)\ep ^d,\quad d=d_1+d_2,
$$
converges to $(\fy *\psi )(x)$ in $\maclS
_{s_1,s_2}^{\sigma _1,\sigma _2}(\rr {d_1+d_2})$ as $\ep \to 0$.

\par

The same holds true if each
$\maclS _{s_1,s_2}^{\sigma _1,\sigma _2}$ and $(\maclS _{s_1,s_2}
^{\sigma _1,\sigma _2})'$ are replaced by
$\Sigma _{s_1,s_2}^{\sigma _1,\sigma _2}$ and $(\Sigma _{s_1,s_2}
^{\sigma _1,\sigma _2})'$,
respectively.
\end{lemma}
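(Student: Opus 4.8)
The plan is to prove the asserted $\maclS _{s_1,s_2}^{\sigma _1,\sigma _2}$-convergence by establishing one weighted $L^\infty$-estimate, uniform in the multi-indices and with an explicit gain of a factor $\ep$, between the Riemann sum and the convolution. Write $d=d_1+d_2$, decompose $x=(x_1,x_2)$ and $k=(k_1,k_2)$ according to $\rr {d_1}\times \rr {d_2}$, and let $Q_{\ep ,k}=\ep k+[0,\ep )^d$ be the mesh cubes, so that $\sum _k\ep ^d\mathbf 1_{Q_{\ep ,k}}\equiv 1$. Abbreviating $S_\ep $ for the Riemann sum, this gives
\[
S_\ep (x)-(\fy *\psi )(x)=\sum _{k\in \zz d}\int _{Q_{\ep ,k}}\bigl[\fy (x-\ep k)\psi (\ep k)-\fy (x-y)\psi (y)\bigr]\,dy ,
\]
and I split the integrand as $[\fy (x-\ep k)-\fy (x-y)]\psi (\ep k)+\fy (x-y)[\psi (\ep k)-\psi (y)]$, treating the two symmetric terms separately.

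For the first term I apply the mean value theorem: for $y\in Q_{\ep ,k}$ one has $\fy (x-\ep k)-\fy (x-y)=\int _0^1(y-\ep k)\cdot (\nabla \fy )(x-y+t(y-\ep k))\,dt$, which supplies the decisive factor $|y-\ep k|\le \sqrt d\,\ep $. Differentiating in $x$ merely raises the order on $\fy $, and I expand the polynomial weight as $x^\alpha =\sum _{\gamma \le \alpha }\binom {\alpha }{\gamma }u^\gamma v^{\alpha -\gamma }$, where $u=x-y+t(y-\ep k)$ is the argument of the displaced derivative of $\fy $ and $v=x-u$ lies within $\sqrt d\,\ep $ of $\ep k$. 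The factor $u^\gamma (\partial ^{\beta +e_j}\fy )(u)$ is controlled by the Gelfand-Shilov norm of $\fy $, producing $\gamma !^{\mabfs }(\beta +e_j)!^{\mabfsi }$ up to a power of $h$; the factor $v^{\alpha -\gamma }\psi (\ep k)$ I rewrite by a second binomial expansion around $\ep k$ and then bound with $|(\ep k)^\delta \psi (\ep k)|\,\eabs {\ep k}^{d+1}\lesssim h^{|\delta |}\delta !^{\mabfs }$, reserving the factor $\eabs {\ep k}^{-d-1}$ precisely so that $\sum _k\eabs {\ep k}^{-d-1}\ep ^d$ stays bounded as $\ep \to 0$. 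Collecting the combinatorial losses — the binomials $\le 2^{|\alpha |}$, the number of admissible $\gamma ,\delta $, and the single extra derivative which only costs $(\beta +e_j)!^{\mabfsi }\le (|\beta |+1)^{\max (\sigma _1,\sigma _2)}\beta !^{\mabfsi }$ — together with $\gamma !^{\mabfs }(\alpha -\gamma )!^{\mabfs }\le \alpha !^{\mabfs }$, all of these are absorbed into an enlarged constant $\tilde h$, yielding
\[
|x^\alpha \partial _x^\beta (S_\ep -\fy *\psi )(x)|\le C\,\ep \,\tilde h^{|\alpha |+|\beta |}\alpha !^{\mabfs }\beta !^{\mabfsi },
\]
uniformly in $x,\alpha ,\beta $; the second term is handled identically, with the auxiliary derivative now falling on $\psi $. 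The same technique shows that each $S_\ep $ itself lies in $\maclS _{\mabfs ;\tilde h}^{\mabfsi }$, so dividing by $\tilde h^{|\alpha |+|\beta |}\alpha !^{\mabfs }\beta !^{\mabfsi }$ and taking the supremum gives $\nm {S_\ep -\fy *\psi }{\maclS _{\mabfs ;\tilde h}^{\mabfsi }}\le C\ep $. Completeness of $\maclS _{\mabfs ;\tilde h}^{\mabfsi }$ then places $\fy *\psi $ in this Banach step and gives convergence there, hence in $\maclS _{s_1,s_2}^{\sigma _1,\sigma _2}(\rr {d_1+d_2})$.

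I expect the main obstacle to be exactly this bookkeeping of the anisotropic factorial weights: one must verify that the factor $\ep $ gained from the mean value theorem is never spent, while the combinatorial factors $2^{|\alpha |}$, the count of multi-indices, the polynomial correction $(|\beta |+1)^{\max (\sigma _1,\sigma _2)}$ from the extra derivative, and the auxiliary weight $\eabs {\ep k}^{-d-1}$ needed for summability over $k$ are \emph{simultaneously} absorbed into one constant $\tilde h$ that is independent of $\ep $. For the Roumieu case this is immediate once $\fy ,\psi \in \maclS _{\mabfs ;h}^{\mabfsi }$ for a common $h$. For the Beurling case $\Sigma _{s_1,s_2}^{\sigma _1,\sigma _2}$ the identical argument applies with $h>0$ arbitrarily small, so $\tilde h$ may be taken arbitrarily small as well; this is precisely convergence in every defining seminorm $\nm \cdo {\maclS _{\mabfs ;h}^{\mabfsi }}$, which establishes the final assertion of the lemma.
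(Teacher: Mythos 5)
Your argument is correct and follows essentially the same route as the paper: the mean value theorem on each mesh cube supplies the decisive factor $\ep$, the binomial expansion of $x^\alpha$ distributes the polynomial weight between the displaced factor of $\fy$ and the factor of $\psi$, and all combinatorial losses are absorbed into an enlarged parameter $\tilde h$, giving a uniform bound $C\ep$ on the seminorm of the difference. The only cosmetic deviations are that you use the integral form of the mean value theorem (so you need not first reduce to real-valued $\fy ,\psi$ as the paper does) and that you secure summability over $k$ via the reserved polynomial weight $\eabs {\ep k}^{-d-1}$ rather than the exponential decay of $\psi$.
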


\par

\begin{proof}
We may assume that $\ep >0$, and consider first the case when
$\fy$ and $\psi$ are real-valued. For multi-indices we use the
convention
$$
\alpha = (\alpha _1,\alpha _2)\in \nn {d_1+d_2},
\quad \beta = (\beta _1,\beta _2)\in \nn {d_1+d_2},
\quad \text{and}\quad
\alpha !^s = \alpha _1!^{s_1}\alpha _2!^{s_2}
$$
when $\alpha _j,\beta _j\in \nn {d_j}$ and $s=(s_1,s_2)\in \rr 2$,  $j =1,2$.
Set
$$
R_{\ep ,\alpha ,\beta }(x)
=
x^{\alpha }D_x^\beta 
\left (
\fy (x-y)\psi (y)\, dy - \sum _{k\in \zz d}\fy (x-\ep k)\psi (\ep k)\ep ^d
\right ).
$$
By the mean-value theorem we have for some $\rho _k=\rho _k(x,y)\in Q_{d,1}$,
$k\in \zz d$ that
\begin{multline*}
|R_{\ep ,\alpha ,\beta }(x)|
=
\left |
\int _{\rr d}
x^{\alpha }(D_x^\beta \fy )(x-y)\psi (y)\, dy
-
\sum _{k\in \zz d}x^{\alpha }(D_x^\beta \fy )(x-\ep k)\psi (\ep k)\ep ^d
\right |
\\[1ex]
=
\left |
\sum _{k\in \zz d}
\left (\int _{\ep k+Q_{d,\ep}}
x^{\alpha }(D_x^\beta \fy )(x-y)\psi (y)\, dy
-
x^{\alpha }(D_x^\beta \fy )(x-\ep k)\psi (\ep k)\ep ^d
\right )
\right |
\\[1ex]
=
\left |
\sum _{k\in \zz d}
\left (
x^{\alpha }(D_x^\beta \fy )(x-\ep k-\ep \rho _k)\psi (\ep k+\ep \rho _k)
-
x^{\alpha }(D_x^\beta \fy )(x-\ep k)\psi (\ep k)
\right ) \ep ^d
\right |
\\[1ex]
\le
\sum _{k\in \zz d}
\left |
x^{\alpha }(D_x^\beta \fy )(x-\ep k-\ep \rho _k)\psi (\ep k+\ep \rho _k)
-
x^{\alpha }(D_x^\beta \fy )(x-\ep k)\psi (\ep k)
\right | \ep ^d
\\[1ex]
\le
\sum _{k\in \zz d}\sum _{j=1}^d \sup _{z\in Q_{d,\ep}}
\left |
D_{z_j}\left ( x^{\alpha }(D_x^\beta \fy )(x-\ep k-z)\psi (\ep k+z)
\right ) \right | \ep ^{d+1}
\le J_1+J_2,
\end{multline*}
where
\begin{align*}
J_1 &= \sum _{\gamma \le \alpha} \sum _{j=1}^d\sum _{k\in \zz d}
{\alpha \choose \gamma}
\sup _{y\in \ep k+Q_{d,\ep}}
\left |
(x-y)^{\gamma}D_{x_j}D_x^\beta \fy (x-y)y^{\alpha -\gamma}
\psi (y) \right | \ep ^{d+1}
\intertext{and}
J_2 &= \sum _{\gamma \le \alpha} \sum _{j=1}^d\sum _{k\in \zz d}
{\alpha \choose \gamma}
\sup _{y\in \ep k+Q_{d,\ep}}
\left |
(x-y)^{\gamma}D_x^\beta \fy (x-y)y^{\alpha -\gamma}
D_{y_j}\psi (y) \right | \ep ^{d+1}.
\end{align*}

\par

Since $(m+1)!\le 2^mm!$ when $m\ge 0$ is an integer, and
$\fy ,\psi \in \maclS _{s_1,s_2}^{\sigma _1,\sigma _2}
(\rr {d_1+d_2})$, we get
\begin{multline*}
J_j
\lesssim
h^{|\alpha +\beta |}2^{\sigma |\beta |}d
\sum _{\gamma \le \alpha} \sum _{k\in \zz d}
{\alpha \choose \gamma}
\gamma !^s\beta !^\sigma 
\sup _{y\in \ep k+Q_{d,\ep}}
\left |
y^{\alpha -\gamma}
e^{-2r(|y_1|^{\frac 1{s_1}} + |y_2|^{\frac 1{s_2}} ) }
\right |
\ep ^{d+1}
\\[1ex]
\lesssim
\ep h^{|\alpha +\beta |}2^{\sigma |\beta |}
\sum _{\gamma \le \alpha} \sum _{k\in \zz d}
{\alpha \choose \gamma}
\gamma !^s\beta !^\sigma 
(\alpha -\gamma )!^s
e^{-r(|\ep k_1|^{\frac 1{s_1}} + |\ep k_2|^{\frac 1{s_2}} ) }
\ep ^d
\\[1ex]
\lesssim
\ep (2^\sigma 2h)^{|\alpha +\beta |} \alpha !^s
\beta !^\sigma 
\sum _{k\in \zz d}
e^{-r(|\ep k_1|^{\frac 1{s_1}} + |\ep k_2|^{\frac 1{s_2}} ) }
\ep ^d
\lesssim 
\ep (2^\sigma 2h)^{|\alpha +\beta |} \alpha !^s
\beta !^\sigma  ,
\end{multline*}
$j=1,2$, for some positive constants $h$ and $r$. This implies
that for some $h>0$ we have
\begin{equation}\label{Eq:(A.1)} 
\sup _{\alpha ,\beta \in \nn d}\left (
\frac {\nm {R_{\ep ,\alpha ,\beta }}{L^\infty }}
{h^{|\alpha +\beta |}\alpha !^s\beta !^\sigma}
\right )
\le C\ep
\end{equation}
for some positive constants $C$ and $h$ which are independent of $\ep$.

\par

Since the right-hand side tends to zero when $\ep >0$ tends to zero,
the stated convergence follows in this case.

\par

The general case follows from the previous case, after writing
$\fy = \fy _1+i\fy _2$
and $\psi = \psi _1+i\psi _2$ with $\fy _{j}$ and $\psi _{j}$ being 
real-valued, $j=1,2$,
giving that $\fy *\psi$ is a superposition of $\fy _{j_1}*\psi _{j_2}$,
$j_1,j_2\in \{ 1,2\}$, and using the fact that $\fy _j\in
\maclS _{s_1,s_2}^{\sigma _1,\sigma _2}(\rr {d_1+d_2})$ when
$\fy \in \maclS _{s_1,s_2}^{\sigma _1,\sigma _2}(\rr {d_1+d_2})$.
\end{proof}

\par

We may now prove the following result related to \cite[Theorem 4.1.2]{Ho1}

\par

\begin{lemma}\label{Lemma:A.2}
Let $s_1,s_2,\sigma _1,\sigma _2>0$, $\fy ,\psi \in \maclS
_{s_1,s_2}^{\sigma _1,\sigma _2}(\rr {d_1+d_2})$ and let $f\in (\maclS
_{s_1,s_2}^{\sigma _1,\sigma _2})'(\rr {d_1+d_2})$. Then
\begin{equation}\label{Eq:ConvAsociative}
(f*\fy )*\psi = f*(\fy *\psi ).
\end{equation}

\par

The same holds true if each
$\maclS _{t_1,t_2}^{s_1,s_2}$ and $(\maclS _{t_1,t_2}^{s_1,s_2})'$
are replaced by
$\Sigma _{t_1,t_2}^{s_1,s_2}$ and $(\Sigma _{t_1,t_2}^{s_1,s_2})'$,
respectively.
\end{lemma}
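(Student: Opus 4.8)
The plan is to reduce \eqref{Eq:ConvAsociative} to the Riemann sum approximation in Lemma \ref{Lemma:A.1}. First I would record that, for $g\in \maclS_{s_1,s_2}^{\sigma_1,\sigma_2}(\rr{d_1+d_2})$, the convolution is the smooth function $(f*g)(x)=\scal{f}{g(x-\cdo)}$, and that $f*\fy$ is itself a smooth function of controlled (dual-exponential) growth, this being the Gelfand--Shilov analogue of \cite[Theorem~4.1.1]{Ho1}. Consequently both members of \eqref{Eq:ConvAsociative} are well-defined continuous functions, and it suffices to establish equality at an arbitrary fixed point $x$.

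Fix $x$ and set $\chi_\ep=\sum_{k\in\zz d}\fy(\cdo-\ep k)\psi(\ep k)\ep^d$. By Lemma \ref{Lemma:A.1}, $\chi_\ep\to\fy*\psi$ in $\maclS_{s_1,s_2}^{\sigma_1,\sigma_2}(\rr{d_1+d_2})$ as $\ep\to0$, and for each fixed $\ep$ this series converges in the same space (this is implicit in, and follows from, the tail estimates of that proof). Since $g\mapsto g(x-\cdo)$ is a composition of the continuous reflection and translation operators on $\maclS_{s_1,s_2}^{\sigma_1,\sigma_2}(\rr{d_1+d_2})$, it follows that $\chi_\ep(x-\cdo)\to(\fy*\psi)(x-\cdo)$ in this space. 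Hence, by the continuity of $f$,
\begin{equation*}
\bigl(f*(\fy*\psi)\bigr)(x)=\scal{f}{(\fy*\psi)(x-\cdo)}=\lim_{\ep\to0}\scal{f(u)}{\chi_\ep(x-u)}.
\end{equation*}

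For fixed $\ep$, applying the continuous linear operator $g\mapsto g(x-\cdo)$ termwise to the convergent series for $\chi_\ep$ shows that $u\mapsto\sum_{k}\fy(x-\ep k-u)\psi(\ep k)\ep^d$ converges to $\chi_\ep(x-u)$ in $\maclS_{s_1,s_2}^{\sigma_1,\sigma_2}(\rr{d_1+d_2})$; the essential, Gelfand--Shilov-specific point is that the seminorms of the translates $\fy(x-\ep k-\cdo)$ grow only sub-exponentially in $\ep k$, a growth dominated by the decay $|\psi(\ep k)|\lesssim e^{-r(|\ep k_1|^{\frac1{s_1}}+|\ep k_2|^{\frac1{s_2}})}$. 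By the continuity and linearity of $f$ we may therefore interchange $f$ with $\sum_k$, giving
\begin{equation*}
\scal{f(u)}{\chi_\ep(x-u)}=\sum_{k\in\zz d}\scal{f(u)}{\fy(x-\ep k-u)}\psi(\ep k)\ep^d=\sum_{k\in\zz d}(f*\fy)(x-\ep k)\psi(\ep k)\ep^d.
\end{equation*}

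Finally, the last expression is precisely a Riemann sum for $\int(f*\fy)(x-z)\psi(z)\,dz=\bigl((f*\fy)*\psi\bigr)(x)$; since $f*\fy$ is continuous of controlled growth and $\psi$ decays rapidly, the integrand $z\mapsto(f*\fy)(x-z)\psi(z)$ is continuous and integrable with fast decay, so these sums converge to the integral as $\ep\to0$, which yields \eqref{Eq:ConvAsociative}. I expect the main obstacle to be exactly this last convergence together with the interchange in the preceding paragraph: neither is covered by Lemma \ref{Lemma:A.1}, which requires \emph{both} factors to lie in the Gelfand--Shilov class, whereas $f*\fy$ merely has controlled growth; one must separately establish the growth bound on $f*\fy$ and the balance between the sub-exponential growth of translates of $\fy$ and the decay of $\psi$. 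The Beurling case is handled identically, replacing ``for some $h,r$'' by ``for every $h,r$'' throughout.
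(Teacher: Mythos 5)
Your proposal is correct and takes essentially the same route as the paper's proof: invoke Lemma \ref{Lemma:A.1}, interchange the pairing with $f$ and the sum over $k$, and then identify $\sum_{k}(f*\fy)(x-\ep k)\psi(\ep k)\ep^d$ as a Riemann sum converging to $((f*\fy)*\psi)(x)$ via the balance between the controlled growth of $f*\fy$ and the decay of $\psi$. The growth-versus-decay estimate you single out as the main remaining obstacle is exactly the step the paper carries out explicitly.
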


\par

\begin{proof}
We use the same notations in the previous proof.
Since the Riemann sum in Lemma \ref{Lemma:A.1}
converges to $\fy *\psi$ in $\maclS _{s_1,s_2}
^{\sigma _1,\sigma _2}$, we get
\begin{multline*}
(f*(\fy *\psi ))(x) = \lim _{\ep \to 0}
\left \langle 
f, \sum _{k\in \zz d}\fy (x-\cdo -\ep k)\psi (\ep k)
\ep ^d \right \rangle 
\\[1ex]
=
\lim _{\ep \to 0}
\left (
\sum _{k\in \zz d}(f*\fy )(x-\ep k)\psi (\ep k)\ep ^d
\right )
.
\end{multline*}
Here the second equality follows by the fact that
$$
y\mapsto \sum _{k\in \zz d}\fy (x-y -\ep k)\psi (\ep k)
$$
converges in $\maclS _{t_1,t_2}^{s_1,s_2}$.

\par

We have that $f*\fy$ is smooth, and for some
$r_0>0$ we have
$$
|(f*\fy )(x-\ep k)\psi (\ep k)| \lesssim
e^{r(|x_1-\ep k_1|^{\frac 1{s_1}} + |x_2-\ep k_2|^{\frac 1{s_2}})}
e^{-2r_0(|\ep k_1|^{\frac 1{s_1}} + |\ep k_2|^{\frac 1{s_2}})}
$$
for every $r>0$. This gives
$$
|(f*\fy )(x-\ep k)\psi (\ep k)| \le C_xe^{-r_0(|\ep k_1|^{\frac 1{s_1}}
+ |\ep k_2|^{\frac 1{s_2}})},
$$
for some constant $C_x$ which only depends on $x$ and $r_0$.
It follows that
$$
\sum _{k\in \zz d}(f*\fy )(x-\ep k)\psi (\ep k)\ep ^d
$$
is a Riemann sum which converges to
$$
\int (f*\fy )(x-y)\psi (y)\, dy=((f*\fy )*\psi ) (x).
$$
Hence \eqref{Eq:ConvAsociative} holds, and the result follows.
\end{proof}

\par

By the previous lemma it is now straight-forward to prove the
following.

\par

\begin{lemma}\label{Lemma:A.3}
Let $\mabfs ,\mabfsi \in \rr n_+$, $d=d_1+\cdots d_n$ and suppose $f\in (\maclS
_{\mabfs}^{\mabfsi})'(\rr {d})$ satisfies
$\scal f{\fy _1\otimes \cdots \otimes \fy _n}=0$ for every
$\fy _j\in \maclS _{s_j}^{\sigma _j}(\rr {d_j})$, $j=1,\dots ,n$. Then
$f=0$.

\par

The same holds true if each $\maclS _{s_j}^{\sigma _j}$, $(\maclS _{s_j}^{\sigma _j})'$,
$\maclS _{\mabfs}^{\mabfsi}$ and $(\maclS _{\mabfs}^{\mabfsi})'$
are replaced by $\Sigma _{s_j}^{\sigma _j}$, $(\Sigma _{s_j}^{\sigma _j})'$,
$\Sigma _{\mabfs}^{\mabfsi}$ and $(\Sigma _{\mabfs}^{\mabfsi})'$,
respectively.
\end{lemma}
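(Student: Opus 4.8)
The plan is to recover $f$ from its regularisations by \emph{tensor-product mollifiers}, each of which is killed by $f$ because of the hypothesis. We may assume throughout that $s_j+\sigma _j\ge 1$ for every $j$, so that all the spaces involved are non-trivial; otherwise $\maclS _{\mabfs}^{\mabfsi}(\rr d)=\{0\}$ and the statement is vacuous. For each $j$ I would fix a function $\chi _j\in \maclS _{s_j}^{\sigma _j}(\rr {d_j})$ with $\int _{\rr {d_j}}\chi _j\, dy=1$, set the dilation $\chi _{j,\ep}(y)=\ep ^{-d_j}\chi _j(\ep ^{-1}y)$ (which again lies in $\maclS _{s_j}^{\sigma _j}$ by dilation invariance), and form
$$
\Phi _\ep =\chi _{1,\ep}\otimes \cdots \otimes \chi _{n,\ep}\in \maclS _{\mabfs}^{\mabfsi}(\rr d),\qquad \int _{\rr d}\Phi _\ep =1 .
$$
The decisive observation is that for every $x\in \rr d$ the function $y\mapsto \Phi _\ep (x-y)$ is the tensor product $\chi _{1,\ep}(x_1-\cdo )\otimes \cdots \otimes \chi _{n,\ep}(x_n-\cdo )$ of test functions, so by hypothesis $(f*\Phi _\ep )(x)=\scal f{\Phi _\ep (x-\cdo )}=0$ for all $x$. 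Hence the smooth function $f*\Phi _\ep$ vanishes identically.

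Next I would transfer this to arbitrary test functions by duality, and this is exactly where Lemma \ref{Lemma:A.2} enters. Writing $\check h(x)=h(-x)$ for the reflection and using the elementary identity $(g*\psi )(0)=\scal g{\check \psi }$ together with the fact that the reflection of a convolution is the convolution of the reflections, one evaluates the associativity relation $(f*\Phi _\ep )*\psi =f*(\Phi _\ep *\psi )$ at the origin to obtain $\scal {f*\Phi _\ep }{\check \psi }=\scal f{\check \Phi _\ep *\check \psi }$. Since reflection is a bijection of the test space, putting $\eta =\check \psi$ gives, for every $\eta \in \maclS _{\mabfs}^{\mabfsi}(\rr d)$,
$$
\scal {f*\Phi _\ep }{\eta }=\scal f{\check \Phi _\ep *\eta }.
$$
Because $f*\Phi _\ep \equiv 0$, the left-hand side is $0$, so $\scal f{\check \Phi _\ep *\eta }=0$ for every test function $\eta $ and every $\ep >0$.

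Finally, $\check \Phi _\ep =\check \chi _{1,\ep}\otimes \cdots \otimes \check \chi _{n,\ep}$ is again a product approximate identity of total mass one, so the regularisations $\check \Phi _\ep *\eta$ converge to $\eta$ in $\maclS _{\mabfs}^{\mabfsi}(\rr d)$ as $\ep \to 0$. Invoking the continuity of $f\in (\maclS _{\mabfs}^{\mabfsi})'(\rr d)$ then yields $\scal f\eta =\lim _{\ep \to 0}\scal f{\check \Phi _\ep *\eta }=0$, and since $\eta$ is arbitrary, $f=0$. The Beurling version is proved verbatim, the only modification being that the $\chi _j$ are chosen in $\Sigma _{s_j}^{\sigma _j}(\rr {d_j})$, which is non-trivial precisely when $s_j+\sigma _j\ge 1$ and $(s_j,\sigma _j)\neq (\tfrac 12,\tfrac 12)$.

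The one genuinely analytic point, and the step I expect to be the \textbf{main obstacle}, is the mollifier convergence $\check \Phi _\ep *\eta \to \eta$ in the Gelfand--Shilov topology. Concretely one must show that the seminorm quotients $\nm {x^\alpha \partial ^\beta (\check \Phi _\ep *\eta -\eta )}{L^\infty }/(h^{|\alpha +\beta |}\alpha !^{\mabfs}\beta !^{\mabfsi})$ are controlled uniformly in $\alpha ,\beta$ and tend to $0$, with the parameter $h$ held fixed (so that in the Roumieu case the convergence takes place inside a single Banach space $\maclS _{\mabfs ;h}^{\mabfsi}$, which is what continuity of $f$ requires). These are the same estimates, built on the subadditivity of factorials and the exponential decay characterisation of $\maclS _{\mabfs}^{\mabfsi}$, that underlie Lemma \ref{Lemma:A.1}; it is here that the Gelfand--Shilov structure, rather than mere associativity of convolution, is essential.
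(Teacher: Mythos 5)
Your proof is correct and follows essentially the same route as the paper's: a tensor-product mollifier whose translates are annihilated by the hypothesis, the associativity Lemma \ref{Lemma:A.2} to move the mollifier onto the test function, and the convergence $\check \Phi _\ep *\eta \to \eta$ in the Gelfand--Shilov topology (which the paper likewise invokes without separate proof, resting on the same estimates as Lemma \ref{Lemma:A.1}). The only organizational difference is that you treat general $n$ directly, whereas the paper proves the case $n=2$ and then inducts.
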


\par

\begin{proof}
We only prove the result in the Roumieu case. The Beurling case follows by
similar arguments and is left for the reader.
We use the same notations as in the previous proofs.

\par

First suppose $n=2$. Let
$\fy \in \maclS _{s_1,s_2}^{\sigma _1,\sigma _2}(\rr {d_1+d_2})$, $\phi _j\in
\maclS _{s_j}^{\sigma _j}(\rr {d_j})$ be such that
$$
\int _{\rr {d_j}}\phi _j(x_j)\, dx_j =1,
$$
and let
$$
\phi _\ep =|\ep | ^{-(d_1+d_2)}(\phi _1\otimes \phi _2)(\ep ^{-1}\cdo ),
$$
when $\ep$ is real. Then the assumptions implies that $\check f *\phi _\ep =0$
for every $\ep$. Here $\check f$ is defined by $\check f(x)= f(-x)$.
By Lemma \ref{Lemma:A.2} we get
$$
\scal f\fy = \lim _{\ep \to 0} \scal f{\phi _\ep *\fy}
= \lim _{\ep \to 0}(\check f*(\phi _\ep *\fy))(0) = \lim _{\ep \to 0}
((\check f*\phi _\ep ) *\fy)(0),
$$
and the result follows for $n=2$.

\par

For general $n\ge 2$, the result follows from the case $n=2$ and induction.
The details are left for the reader.
\end{proof}

\par

\begin{proof}[Proof of Theorem \ref{Prop:GSTensor}]
We only prove the result in the Roumieu cases.
The Beurling cases
follow by similar arguments and are left for the reader.

\par

By straight-forward computations it follows that
$$
\fy \mapsto \scal {f_1}{\psi _1}
\quad \text{and}\quad
\fy \mapsto \scal {f_2}{\psi _2}
$$
define continuous linear forms $g_1$ and $g_2$ on 
$\maclS _{s_1,s_2}^{\sigma _1,\sigma _2}(\rr {d_1+d_2})$. Hence $g_1,g_2
\in (\maclS _{s_1,s_2}^{\sigma _1,\sigma _2})'(\rr {d_1+d_2})$.
It is clear that both $g_1$ and $g_2$ in place of $f$ satisfy
\eqref{Eq:TensorTensor1},
and the existence of $f$ follows.

\par

If $f\in (\maclS _{s_1,s_2}^{\sigma _1,\sigma _2})'(\rr {d_1+d_2})$ is
arbitrary such that \eqref{Eq:TensorTensor1} holds, then
$$
\scal {f-g_j}{\fy _1\otimes \fy _2} = \scal {f_1}{\fy _1}\scal {f_2}{\fy _2}
-\scal {f_1}{\fy _1}\scal {f_2}{\fy _2} = 0,
$$
and Lemma \ref{Lemma:A.3}  shows that $f=g_1=g_2$. This gives
the uniqueness of $f$, as well as \eqref{Eq:TensorTensor2}.
\end{proof}

\par

In order to consider corresponding multi-linear situation of
Theorem \ref{Prop:GSTensor}, we let $S_n$ be the permutation group
of $\{ 1,\dots ,n\}$, and let inductively
\begin{align}
\fy _{n,\tau} (x_{\tau (1)},\dots ,x_{\tau (n)})
&=
\fy (x_1,\dots ,x_n),\qquad x_j\in \rr {d_j},\ \tau \in S_n,
\label{Eq:DefFyn}
\intertext{and}
\fy _{j,\tau} (x_{\tau (1)},\dots ,x_{\tau (j)})
&=
\scal {f_{\tau (j+1)}}
{\fy _{j+1,\tau}(x_{\tau (1)},\dots ,x_{\tau (j)},\cdo)}
\label{Eq:DefFyj}
\end{align}
when $f_j$ for $j=1,\dots ,n$ are suitable distributions
and $\fy$ is a suitable function. Then
Theorem \ref{Prop:GSTensor} can be reformulated as follows. It is
also convenient to set
\begin{equation}\label{Eq:stauDef}
\begin{gathered}
\mabfs _{j,\tau} = (s_{\tau (1)},\dots ,s_{\tau (j)}),\qquad
\mabfsi _{j,\tau} = (\sigma _{\tau (1)},\dots ,\sigma _{\tau (j)})
\\[1ex]
\text{and}\quad
d_{j,\tau} = d_{\tau (1)}+\cdots + d_{\tau (j)},
\end{gathered}
\end{equation}
when $j=1,\dots ,n$ and $\mabfs ,\mabfsi \in \rr n_+$,

\par

\begin{thm}\label{Prop:GSTensorM}
Let $\tau \in S_2$, $d=d_1+d_2$, $\mabfs ,\mabfsi \in \rr 2_+$,
$d_{j,\tau}$, $\mabfs _{j,\tau}$ and $\mabfsi _{j,\tau}$ be as in
\eqref{Eq:stauDef}, $f_j\in (\maclS _{s_j}^{\sigma _j})'(\rr {d_j})$,
$\fy \in \maclS _{\mabfs}^{\mabfsi}(\rr {d})$
and let $\fy _{j,\tau}$ be given by \eqref{Eq:DefFyn} and
\eqref{Eq:DefFyj}, $j=1,2$.
Then $\fy _{j,\tau}\in  \maclS _{\mabfs _{j\tau}} ^{\mabfsi _{j,\tau}}
(\rr {d_{j,\tau}})$, and
there is a unique distribution $f$ in
$(\maclS _{\mabfs}^{\mabfsi})'(\rr {d})$ 
such that
for every $\fy _j\in \maclS _{s_j}^{\sigma _j}(\rr {d_j})$, $j=1,\dots ,n$, and
$\fy _2\in \maclS _{s_2}^{\sigma _2}(\rr {d_2})$,
\begin{equation}\label{Eq:TensorSplitFubbini}
\scal f{\fy _1\otimes \fy _2} =  \prod _{k=1}^2 \scal {f_k}{\fy _k}
\quad \text{and}\quad
\scal f{\fy}=\scal {f_{\tau (1)}}{\fy _{1,\tau}}
\end{equation}
hold.

\medspace

The same holds true with 
$\Sigma _{s_j}^{\sigma _j}$, $(\Sigma _{s_j}^{\sigma _j})'$,
$\Sigma _{\mabfs}^{\mabfsi}$ and
$(\Sigma _{\mabfs}^{\mabfsi})'$
in place of
$\maclS _{s_j}^{\sigma _j}$, $(\maclS _{s_j}^{\sigma _j})'$,
$\maclS _{\mabfs}^{\mabfsi}$ and
$(\maclS _{\mabfs}^{\mabfsi})'$,
respectively, at each occurrence.
\end{thm}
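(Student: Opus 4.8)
The plan is to deduce the statement directly from the already-established Theorem~\ref{Prop:GSTensor}, of which it is essentially a reformulation in the case $n=2$. First I would apply Theorem~\ref{Prop:GSTensor} to the pair $f_1,f_2$ and the test function $\fy \in \maclS_{\mabfs}^{\mabfsi}(\rr d)$, with $\psi_1,\psi_2$ given by \eqref{Eq:psijFunctions}. This furnishes $\psi_1 \in \maclS_{s_1}^{\sigma_1}(\rr{d_1})$ and $\psi_2 \in \maclS_{s_2}^{\sigma_2}(\rr{d_2})$, together with a unique $f \in (\maclS_{\mabfs}^{\mabfsi})'(\rr d)$ satisfying \eqref{Eq:TensorTensor1} --- which is the first relation in \eqref{Eq:TensorSplitFubbini} --- and the full chain $\scal f\fy = \scal{f_1}{\psi_1} = \scal{f_2}{\psi_2}$. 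The single distribution $f$ obtained in this way serves for both permutations $\tau \in S_2$, since its characterising property \eqref{Eq:TensorTensor1} does not involve $\tau$; this at once yields both the existence and the uniqueness claimed.

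It then remains to match the $\tau$-dependent data in \eqref{Eq:DefFyn}--\eqref{Eq:DefFyj} with $\psi_1$ and $\psi_2$. Unwinding those definitions, for $\tau = \mathrm{id}$ one has $\fy_{2,\tau} = \fy$, $\fy_{1,\tau} = \psi_1$ and $f_{\tau(1)} = f_1$, so the Fubbini relation $\scal f\fy = \scal{f_{\tau(1)}}{\fy_{1,\tau}}$ in \eqref{Eq:TensorSplitFubbini} is the first identity of the chain above; for the transposition $\tau = (1\,2)$ one has $\fy_{2,\tau}(y_1,y_2) = \fy(y_2,y_1)$, $\fy_{1,\tau} = \psi_2$ and $f_{\tau(1)} = f_2$, so the same relation reads $\scal f\fy = \scal{f_2}{\psi_2}$, the second identity of the chain. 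In either case it is one of the two equalities already supplied by Theorem~\ref{Prop:GSTensor}.

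The membership assertions $\fy_{j,\tau} \in \maclS_{\mabfs_{j,\tau}}^{\mabfsi_{j,\tau}}(\rr{d_{j,\tau}})$ are read off in the same way: for $j=1$ they are exactly $\psi_1 \in \maclS_{s_1}^{\sigma_1}(\rr{d_1})$ or $\psi_2 \in \maclS_{s_2}^{\sigma_2}(\rr{d_2})$, and for $j=2$, $\tau = \mathrm{id}$ the claim is the hypothesis $\fy \in \maclS_{\mabfs}^{\mabfsi}(\rr d)$. The one point needing a separate word is $j=2$ with $\tau = (1\,2)$, where one must check that the coordinate swap $\fy(y_2,y_1)$ lies in $\maclS_{s_2,s_1}^{\sigma_2,\sigma_1}(\rr{d_2+d_1})$; this is immediate from the defining seminorm \eqref{gfseminorm}, which is invariant under simultaneously permuting the coordinate blocks $x_j$ and the orders $(s_j,\sigma_j)$. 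The Beurling case, with $\Sigma_{\mabfs}^{\mabfsi}$ and $(\Sigma_{\mabfs}^{\mabfsi})'$ replacing $\maclS_{\mabfs}^{\mabfsi}$ and its dual, is word-for-word the same, invoking the Beurling half of Theorem~\ref{Prop:GSTensor}. I expect no real obstacle beyond this bookkeeping, the only substantive input being the permutation-invariance of the Gelfand-Shilov seminorms, which is routine.
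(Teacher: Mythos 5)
Your proposal is correct and follows exactly the route the paper intends: the paper presents Theorem~\ref{Prop:GSTensorM} as a reformulation of Theorem~\ref{Prop:GSTensor} with no separate proof, and your unwinding of \eqref{Eq:DefFyn}--\eqref{Eq:DefFyj} for $\tau=\mathrm{id}$ and $\tau=(1\,2)$, identifying $\fy_{1,\tau}$ with $\psi_1$ or $\psi_2$ and noting the permutation-invariance of the seminorms \eqref{gfseminorm}, is precisely the bookkeeping that justifies this. Nothing further is needed.
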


\par

Here the second equality in \eqref{Eq:TensorSplitFubbini} is the same as
the Fubbini property \eqref{Eq:TensorTensor2}. The multi-linear version of
the previous theorem is the following, and follows by similar arguments as
for its proof. The details are left for the reader.

\par

\renewcommand{\rubrik}{Theorem \ref{Prop:GSTensorM}$'$}

\par

\begin{tom}
Let $\tau \in S_n$, $d=d_1+\dots +d_n$, $\mabfs ,\mabfsi \in \rr n_+$,
$d_{j,\tau}$, $\mabfs _{j,\tau}$ and $\mabfsi _{j,\tau}$ be as in
\eqref{Eq:stauDef}, $f_j\in (\maclS _{s_j}^{\sigma _j})'(\rr {d_j})$,
$\fy \in \maclS _{\mabfs}^{\mabfsi}(\rr {d})$
and let $\fy _{j,\tau}$ be given by \eqref{Eq:DefFyn} and
\eqref{Eq:DefFyj}, $j=1,\dots ,n$.
Then $\fy _{j,\tau}\in  \maclS _{\mabfs _{j\tau}} ^{\mabfsi _{j,\tau}}
(\rr {d_{j,\tau}})$, and
there is a unique distribution $f$ in
$(\maclS _{\mabfs}^{\mabfsi})'(\rr {d})$ 
such that
for every $\fy _j\in \maclS _{s_j}^{\sigma _j}(\rr {d_j})$, $j=1,\dots ,n$,
\begin{equation}\tag*{(\ref{Eq:TensorSplitFubbini})$'$}
\scal f{\fy _1\otimes \cdots \otimes \fy _n} =  \prod _{k=1}^n
\scal {f_k}{\fy _k}
\quad \text{and}\quad
\scal f{\fy}=\scal {f_{\tau (1)}}{\fy _{1,\tau}}
\end{equation}
hold.

\medspace

The same holds true with
$\Sigma _{s_j}^{\sigma _j}$, $(\Sigma _{s_j}^{\sigma _j})'$,
$\Sigma _{\mabfs}^{\mabfsi}$ and
$(\Sigma _{\mabfs}^{\mabfsi})'$
in place of
$\maclS _{s_j}^{\sigma_j}$, $(\maclS _{s_j}^{\sigma _j})'$,
$\maclS _{\mabfs}^{\mabfsi}$ and
$(\maclS _{\mabfs}^{\mabfsi})'$,
respectively, at each occurrence.
\end{tom}

\par

\begin{example}\label{Ex:STFTforGS}
Let $s,\sigma >0$. An important object in time-frequency and micro-local analysis concerns
the short-time Fourier transform. If $\phi \in \maclS _s^\sigma (\rr d)\setminus \{ 0\}$
is fixed, then the short-time Fourier transform of $f\in \maclS _s^\sigma (\rr d)$ is
defined by
$$
V_\phi f(x,\xi ) = (2\pi )^{-\frac d2}\int _{\rr d}f(y)\overline{\phi (y-x)}e^{-i\scal y\xi}\, dy.
$$
It follows that
\begin{align}
V_\phi f(x,\xi ) &= (2\pi )^{-\frac d2}\scal f{\overline{\phi (\cdo -x)}e^{-i\scal \cdo \xi}}
\label{Eq:STFTident1}
\intertext{and}
V_\phi f(x,\xi ) &= \mascF (f\cdot \overline{\phi (\cdo -x)})(\xi )
\label{Eq:STFTident2}
\end{align}
for such choices of $\phi$ and $f$.

\par

We notice that the right-hand side of \eqref{Eq:STFTident1} also makes
sense as a smooth function on $\rr {2d}$ if
the assumption on $f$ is relaxed into $f\in (\maclS _s^\sigma )'(\rr d)$. For such
$f$ we therefore let \eqref{Eq:STFTident1} define the short-time Fourier transform of
$f$ with respect to $\phi$.
Since the map which takes $\phi$ into $y\mapsto \phi (y-x)e^{i\scal y\xi}$ is
continuous and smooth with respect to $(x,\xi)$ from $\maclS _s^\sigma (\rr d)$ to itself 
it follows that $V_\phi f$ is smooth.
By \cite[Proposition 2.2]{Toft15} it follows that
$V_\phi f$ belongs to $(\maclS _{s,\sigma}^{\sigma ,s})'(\rr {2d})$. Consequently,
$$
V_\phi f\in (\maclS _{s,\sigma}^{\sigma ,s})'(\rr {2d})\cap C^\infty (\rr {2d}).
$$

\par

Let $U$ be the operator which takes any $F(x,y)$ into $F(y,y-x)$ and recall that
$\mascF _2F$ is the partial Fourier transform of $F(x,y)$ with respect to the $y$ variable.
Then the right-hand side of \eqref{Eq:STFTident2} equals
\begin{equation}\label{Eq:ReformSTFT}
(\mascF _2(U(f\otimes \overline \phi )))(x,\xi ).
\end{equation}
We notice that the right-hand side makes sense as an element in 
$(\maclS _{s,\sigma}^{\sigma ,s})'(\rr {2d})$ for any $f,\phi \in (\maclS _s^\sigma )'(\rr d)$
in view of Remark \ref{Rem:FTExtentions},
which may be used to extend the definition of the short-time Fourier transform to
even more general situations.

\par

We claim that the right-hand sides of \eqref{Eq:STFTident1} and \eqref{Eq:STFTident2}
agree when $f \in (\maclS _s^\sigma )'(\rr d)$ and $\phi \in \maclS _s^\sigma (\rr d)$.

\par

In fact, let $\psi \in \maclS _{s,\sigma}^{\sigma ,s}(\rr {2d})$ and set
\begin{align*}
\fy (x,\xi ,y) &\equiv \psi (x,\xi )\overline{\phi (y-x)}e^{-i\scal y\xi}\in \maclS _{s,\sigma ,s}
^{\sigma ,s,\sigma}(\rr {3d}),
\\[1ex]
F &\equiv 1_{\rr {2d}}\otimes f = 1_{\rr {d}}\otimes 1_{\rr {d}}\otimes f \in (\maclS _{s,\sigma ,s}
^{\sigma ,s,\sigma})'(\rr {3d}),
\\[1ex]
\fy _1(x,y) &\equiv
\int _{\rr {d}} \fy (x,\xi ,y)\, d\xi
=
\int _{\rr {d}} \psi (x,\xi )\overline {\phi (y-x)}e^{-i\scal y \xi}\, dxd\xi ,
\\[1ex]
\fy _2(x,\xi ) &\equiv
\scal f{\psi (x,\xi )\overline {\phi (\cdo -x)}e^{-i\scal \cdo \xi}} = \psi (x,\xi )\cdot V_\phi f(x,\xi ) ,
\end{align*}
and let $g$ be the right-hand side of \eqref{Eq:STFTident2}.
By the Fubbini property at the right-hand of \eqref{Eq:TensorSplitFubbini}$'$
we get
\begin{equation}\label{Eq:TillampFubb1}
\scal F\fy = \scal {1_{\rr d}\otimes f}{\fy _1} = \scal g\psi
\end{equation}
and
\begin{equation}\label{Eq:TillampFubb2}
\scal F\fy = \scal {1_{\rr {2d}}}{\fy _2} = \scal {V_\phi f}\psi .
\end{equation}
Since $\psi$ was arbitrarily chosen, it follows that $g=V_\phi f$ in
$(\maclS _{s,\sigma}^{\sigma ,s})'(\rr {2d})$, and the claim follows.
\end{example}

\par

\section{Tensor product of Pilipovi{\'c}
spaces}\label{sec3}

\par

In this section we discuss the tensor map on Pilipovi{\'c}
spaces. Especially we prove Theorem \ref{Prop:GSTensor}.
Thereafter we deduce a multi-linear version of this result.

\par

First we show that the tensor map possess natural
mapping properties on Pilipovi{\'c} spaces.

\par

\begin{prop}
Let $s\in \overline {\mathbf R_\flat}$. Then the following is true:
\begin{enumerate}
\item the map $(f_1,f_2)\mapsto f_1\otimes f_2$ from $\mascS (\rr {d_1})
\times \mascS (\rr {d_2})$ to $\mascS (\rr {d_1+d_2})$, restricts to a continuous
map from $\maclH _s(\rr {d_1})
\times \maclH _s (\rr {d_2})$ to $\maclH _s (\rr {d_1+d_2})$;

\vrum

\item the map $(f_1,f_2)\mapsto f_1\otimes f_2$ from $\mascS (\rr {d_1})
\times \mascS (\rr {d_2})$ to $\mascS (\rr {d_1+d_2})$, restricts to a continuous
map from $\maclH _{0,s}(\rr {d_1})
\times \maclH _{0,s} (\rr {d_2})$ to $\maclH _{0,s} (\rr {d_1+d_2})$.
\end{enumerate}
\end{prop}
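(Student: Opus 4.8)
The plan is to transport everything to the side of Hermite coefficients, where the tensor product becomes diagonal and completely explicit. The first step is the factorisation of Hermite functions: for $\alpha _j\in \nn {d_j}$ and $x_j\in \rr {d_j}$ one has $h_{(\alpha _1,\alpha _2)}(x_1,x_2)=h_{\alpha _1}(x_1)h_{\alpha _2}(x_2)$, which is immediate from the defining formula for $h_\alpha$, since both $e^{-|x|^2}$ and $\partial ^\alpha$ split over the two groups of variables. Consequently $\{ h_{(\alpha _1,\alpha _2)}\}$ is the Hermite basis of $L^2(\rr {d_1+d_2})$, and for $f_1\in \maclH _s(\rr {d_1})$, $f_2\in \maclH _s(\rr {d_2})$ Fubini's theorem gives
\[
c_h(f_1\otimes f_2,(\alpha _1,\alpha _2))=c_h(f_1,\alpha _1)\, c_h(f_2,\alpha _2).
\]
Thus the whole proposition reduces to estimating these product coefficients against the weights $\vartheta _{r,s}$.

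The second step is to compare $\vartheta _{r,s}$ on a joined index with the product of its values on the two pieces, using $|(\alpha _1,\alpha _2)|=|\alpha _1|+|\alpha _2|$ and $(\alpha _1,\alpha _2)!=\alpha _1!\, \alpha _2!$. When $s=\flat _\sigma$ the weight factors exactly, $\vartheta _{r,s}((\alpha _1,\alpha _2))=\vartheta _{r,s}(\alpha _1)\vartheta _{r,s}(\alpha _2)$, so only powers of $r$ need to be matched. When $s\in \mathbf R_+$ I would use the elementary inequality $(a+b)^{p}\le C_s(a^{p}+b^{p})$ for $a,b\ge 0$, where $p=\tfrac 1{2s}$ and $C_s=\max (1,2^{\,p-1})$, applied to $a=|\alpha _1|$, $b=|\alpha _2|$. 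This yields
\[
\vartheta _{r_1,s}(\alpha _1)\vartheta _{r_2,s}(\alpha _2)\le \vartheta _{r,s}((\alpha _1,\alpha _2))
\]
for a suitable $r$: for the Roumieu estimate one may take $r=\min (r_1,r_2)/C_s$, while for the Beurling estimate, given a prescribed $r'$, one takes $r_1=r_2=C_sr'$ and obtains the same inequality with $r'$ on the right.

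Combining the coefficient identity with these weight bounds gives at once the bilinear estimate on the Banach building blocks,
\[
\nm {f_1\otimes f_2}{\maclH _{s;r}}\le \nm {f_1}{\maclH _{s;r_1}}\, \nm {f_2}{\maclH _{s;r_2}},
\]
for the triples $(r,r_1,r_2)$ identified above. For the Roumieu space this says that the tensor map sends $\maclH _{s;r_1}\times \maclH _{s;r_2}$ boundedly into $\maclH _{s;r}\hookrightarrow \maclH _s$; continuity on the inductive limit $\maclH _s=\indlim _r\maclH _{s;r}$ then follows from this step-wise boundedness together with the (DFS) structure of the space (the linking inclusions $\maclH _{s;r'}\hookrightarrow \maclH _{s;r}$ are compact, since the relevant weight ratio tends to $0$ as $|\alpha |\to \infty$). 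For the Beurling space $\maclH _{0,s}=\projlim _r\maclH _{s;r}$ the same estimate is exactly the criterion for joint continuity of a bilinear map into a Fréchet space: every defining seminorm $\nm \cdo {\maclH _{s;r'}}$ of the target is dominated by a product of seminorms of the two factors. Finally the degenerate case $s=0$ is handled directly, since $\maclH _0(\rr {d})$ consists of finite Hermite expansions, the tensor product of two such is again finite, and the map lands continuously in a finite-dimensional step of the relevant inductive limit.

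The main obstacle I anticipate is twofold. First, the bookkeeping of the parameter $r$ must respect the Roumieu/Beurling dichotomy: $\maclH _{s;r}$ increases in one direction of $r$ and decreases in the other, so the inequality $\vartheta _{r_1,s}\vartheta _{r_2,s}\le \vartheta _{r,s}$ has to be arranged with the correct monotonicity in each case. Second, and more delicate, is upgrading the step-wise boundedness to genuine \emph{joint} continuity of the tensor map on the inductive limit $\maclH _s$; this is where the nuclear (DFS) structure of the Roumieu spaces is essential, whereas in the Beurling (Fréchet) case joint continuity is immediate from the seminorm estimate.
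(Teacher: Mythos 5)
Your proof is correct and follows essentially the same route as the paper: factor the Hermite basis over the two groups of variables, multiply the coefficient bounds, and compare $|\alpha _1|^{1/(2s)}+|\alpha _2|^{1/(2s)}$ with $|\alpha |^{1/(2s)}$ (respectively use $(\alpha _1,\alpha _2)!=\alpha _1!\,\alpha _2!$ in the $\flat _\sigma$ case). If anything you are more explicit than the paper on two points it glosses over, namely the exact factorisation of the weight when $s=\flat _\sigma$ and the passage from the step-wise bilinear estimate $\nm {f_1\otimes f_2}{\maclH _{s;r}}\le \nm {f_1}{\maclH _{s;r_1}}\nm {f_2}{\maclH _{s;r_2}}$ to joint continuity on the inductive limit.
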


\par

\begin{proof}
We only prove (1) and in the case $s>0$.
The case $s=0$ and  (2) follow by similar arguments and are
left for the reader. If
$$
f_j= \sum _{\alpha _j\in \nn {d_j}} c_{\alpha _j}(f_j)h_{\alpha _j},
$$
then
$$
f=\sum _{\alpha \in \nn d}c_\alpha h_\alpha ,\qquad c_\alpha 
= c_{\alpha _1}(f_1)c_{\alpha _2}(f_2),\qquad \alpha =(\alpha _1,\alpha _2),\
\alpha _j\in \nn {d_j},\ j=1,2.
$$
If $s\in \mathbf R_+$, then
$$
|c_{\alpha _j}(f_j)| \lesssim e^{-c |\alpha _j|^{\frac 1{2s}}},
$$
for some $c>0$. This gives
$$
|c_\alpha | \lesssim e^{-c (|\alpha _1|^{\frac 1{2s}}+|\alpha _2|^{\frac 1{2s}}}
\le e^{-c |\alpha |^{\frac 1{2s}}/(1+2^{\frac 1s})},\quad \alpha =(\alpha _1,\alpha _2),
$$
and it follows that $f\in \maclH _s(\rr d)$.

\par

If instead $s=\flat _\sigma$, for some $\sigma >0$, then
$$
|c_{\alpha _j} (f_j)|\lesssim r^{|\alpha _j|}\alpha _j!^{\frac 1{2\sigma }},\qquad j=1,2,
$$
for some $r>0$. Hence, if $\alpha =(\alpha _1,\alpha _2)$, we get
$$
|c_\alpha |\lesssim r^{|\alpha |}(\alpha _1!\alpha _2!)^{\frac 1{2\sigma }}
\le
((2^{\frac 1s}+1)r)^{|\alpha |}\alpha !^{\frac 1{2\sigma}},
$$
for every $r>0$, and it follows that $f\in \maclH _s(\rr d)$ in this case as well. From these
estimates it also follows that the map $(f_1,f_2)\to f_1\otimes f_2$ is continuous from
$\maclH _s(\rr {d_1}) \times \maclH _s (\rr {d_2})$ to $\maclH _s (\rr {d_1+d_2})$, and
the result follows.
\end{proof}

%

\par

\begin{proof}[Proof of Theorem \ref{Thm:TensorprodDistr}]
Let $d=d_1+d_2$. We shall deal with the Hermite sequence representations
of the elements in
the Pilipovi{\'c} spaces. Such approach is performed in \cite{ReSi}, when deducing tensor
product and kernel results for tempered distributions.
We only prove the results when $f_j\in \maclH _s'(\rr d)$ and $s>0$.
The cases when $f_j\in \maclH _{0,s}'(\rr d)$ or $s=0$ follow by similar arguments and are
left for the reader.

\par

First we prove the uniqueness. Suppose that both $f,g\in \maclH _s'(\rr {d})$ satisfy
\begin{equation}\label{Eq:ActsOnTensorprods2}
\scal f{\phi _1\otimes \phi _2} = \scal g{\phi _1\otimes \phi _2}
= \scal {f_1}{\phi _1} \scal {f_2}{\phi _2},
\end{equation}
and let $c_\alpha (f)$ and $c_\alpha (g)$ be their Hermite coefficients of order $\alpha \nn d$.
By choosing $\phi _1=h_{\alpha _1}$ and $\phi _2=h_{\alpha _2}$, \eqref{Eq:ActsOnTensorprods2}
implies $c_\alpha (f)=c_\alpha (g)$ when $\alpha =(\alpha _1,\alpha _2)$. Consequently,
$f=g$, and the uniqueness follows.

\par

We have
$$
f_j= \sum _{\alpha _j\in \nn {d_j}} c_{\alpha _j}(f_j)h_{\alpha _j},
$$
where $c_{\alpha _j}(f_j)$ for every $\alpha _j\in \nn {d_j}$ are unique and equal to
$(f_j,h_{\alpha _j})$, $j=1,2$. 

\par

Now let $f$ be the element in $\maclH _0'(\rr {d})$, $d=d_1+d_2$ with expansion
$$
f=\sum _{\alpha \in \nn d}c_\alpha h_\alpha ,
$$
where
$$
c_\alpha = c_{\alpha _1}(f_1)c_{\alpha _2}(f_2),\qquad \alpha =(\alpha _1,\alpha _2),\
\alpha _j\in \nn {d_j},\ j=1,2.
$$
We claim that $f\in \maclH _s'(\rr d)$.

\par

In fact, if $s\in \mathbf R_+$, then
\begin{equation}\label{Eq:HermitejCoeffEst}
|c_{\alpha _j}(f_j)| \lesssim e^{\ep |\alpha _j|^{\frac 1{2s}}}
\end{equation}
for every $\ep >0$, and it follows that
\begin{equation*}
|c_\alpha | \lesssim e^{\ep (|\alpha _1|^{\frac 1{2s}}+|\alpha _2|^{\frac 1{2s}} )}
\le e^{2\ep |\alpha |^{\frac 1{2s}}},\quad \alpha =(\alpha _1,\alpha _2),
\end{equation*}
for every $\ep >0$. This is the same as $f\in \maclH _s'(\rr d)$.

\par

If instead $s=\flat _\sigma$, for some $\sigma >0$, then
$$
|c_{\alpha _j} (f_j)|\lesssim r^{|\alpha _j|}\alpha _j!^{\frac 1{2\sigma }},\qquad j=1,2,
$$
for every $r>0$. Hence, if $\alpha =(\alpha _1,\alpha _2)$, we get
$$
|c_\alpha |\lesssim r^{|\alpha |}(\alpha _1!\alpha _2!)^{\frac 1{2\sigma }}
=
r^{|\alpha |}\alpha !^{\frac 1{2\sigma}},
$$
for every $r>0$, and it follows that $f\in \maclH _s'(\rr d)$ in this case as well.

\par

If $\fy _j\in \maclH _0(\rr {d_j})$ and $\fy \in \maclH _0(\rr d)$, $j=1,2$, then
\eqref{Eq:TensorTensor1} and \eqref{Eq:TensorTensor2} follow by straight-forward
computations, using the fact that the set of Hermite functions is an orthonormal basis of
$L^2$. For general $\fy _j\in \maclH _s(\rr {d_j})$ and
$\fy \in \maclH _s(\rr d)$, $j=1,2$, the result now follows
from dominating convergence, using the fact that $\maclH _0(\rr d)$ is dense in
$\maclH_s(\rr d)$.
\end{proof}

\par

In order to formulate a multi-linear version of Theorem \ref{Thm:TensorprodDistr}
we first reformulate the result as follows.

\begin{thm}\label{Prop:PilTensorM}
Let $\tau \in S_2$, $d=d_1+d_2$, $s\in \overline{\mathbf R_\flat}$,
$d_{j,\tau}$ be as in
\eqref{Eq:stauDef}, $f_j\in (\maclH _s)'(\rr {d_j})$,
$\fy \in \maclH _s(\rr {d})$
and let $\fy _{j,\tau}$ be given by \eqref{Eq:DefFyn} and
\eqref{Eq:DefFyj}, $j=1,2$.
Then $\fy _{j,\tau}\in  \maclH _s
(\rr {d_{j,\tau}})$, and
there is a unique distribution $f$ in $(\maclH _s)'(\rr {d})$ 
such that
for every $\fy _{j}\in \maclH _s(\rr {d_{j}})$,
$j=1,2$,
$$
\scal f{\fy _1\otimes \fy _2} =  \prod _{k=1}^2 \scal {f_k}{\fy _k}
\quad \text{and}\quad
\scal f{\fy}=\scal {f_{\tau (1)}}{\fy _{1,\tau}}
$$
hold.

\medspace

The same holds true with 
$\maclH _{0,s}$ and
$\maclH _{0,s}'$ in place of
$\maclH _{s}$ and
$\maclH _{s}'$, respectively, at each occurrence.
\end{thm}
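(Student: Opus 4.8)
The plan is to treat this statement as a reformulation of Theorem~\ref{Thm:TensorprodDistr}, so that the existence and uniqueness of $f\in (\maclH _s)'(\rr d)$ satisfying the first (tensor) identity come directly from that theorem; the remaining work is to identify the permutation-indexed objects $\fy _{j,\tau}$ with the functions $\psi _j$ in \eqref{Eq:psijFunctions}, to check that $\fy _{j,\tau}\in \maclH _s$, and to rewrite the Fubbini property \eqref{Eq:TensorTensor2} in the form $\scal f\fy =\scal {f_{\tau (1)}}{\fy _{1,\tau}}$. First I would unwind the definitions \eqref{Eq:DefFyn} and \eqref{Eq:DefFyj} for $n=2$. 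Since $S_2$ has only two elements, there are two cases. For $\tau =\mathrm{id}$ one gets $\fy _{2,\tau}=\fy$ and $\fy _{1,\tau}(x_1)=\scal {f_2}{\fy (x_1,\cdo )}=\psi _1(x_1)$, while $f_{\tau (1)}=f_1$; for the transposition one gets $\fy _{2,\tau}(x_2,x_1)=\fy (x_1,x_2)$ and $\fy _{1,\tau}(x_2)=\scal {f_1}{\fy (\cdo ,x_2)}=\psi _2(x_2)$, while $f_{\tau (1)}=f_2$. Thus the asserted Fubbini identity $\scal f\fy =\scal {f_{\tau (1)}}{\fy _{1,\tau}}$ reduces, in the two cases, to $\scal f\fy =\scal {f_1}{\psi _1}$ and $\scal f\fy =\scal {f_2}{\psi _2}$ respectively, both of which are contained in the Fubbini property \eqref{Eq:TensorTensor2} of Theorem~\ref{Thm:TensorprodDistr}.

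It remains to verify that $\fy _{j,\tau}\in \maclH _s(\rr {d_{j,\tau}})$, and here I would argue entirely on the level of Hermite coefficients, as in the proof of Theorem~\ref{Thm:TensorprodDistr}. Writing $\fy =\sum _\alpha c_\alpha (\fy )h_\alpha$ with $\alpha =(\alpha _1,\alpha _2)$ and $h_\alpha =h_{\alpha _1}\otimes h_{\alpha _2}$, the function $\fy _{2,\tau}$ is obtained by interchanging the two coordinate blocks, so $c_{(\alpha _2,\alpha _1)}(\fy _{2,\tau})=c_{(\alpha _1,\alpha _2)}(\fy )$; since the Pilipovi{\'c} weights $\vartheta _{r,s}$ are symmetric in the two blocks (the order $s$ being common to both), the decay condition defining $\maclH _s$ is preserved and $\fy _{2,\tau}\in \maclH _s$. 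For $\fy _{1,\tau}$, pairing the expansion of $\fy $ coordinate-wise with $f_{\tau (2)}$ gives the Hermite coefficients $c_{\alpha _{\tau (1)}}(\fy _{1,\tau})=\sum _{\alpha _{\tau (2)}}c_\alpha (\fy )\, c_{\alpha _{\tau (2)}}(f_{\tau (2)})$, and the task is to dominate this sum by an admissible weight.

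The main estimate is then to absorb the growth of the distributional coefficients into the decay of $c_\alpha (\fy )$. In the Roumieu case $s\in \mathbf R_+$ one has $|c_\alpha (\fy )|\lesssim e^{-r(|\alpha _1|^{1/2s}+|\alpha _2|^{1/2s})}$ for some $r>0$, while $|c_{\alpha _{\tau (2)}}(f_{\tau (2)})|\lesssim e^{r'|\alpha _{\tau (2)}|^{1/2s}}$ for every $r'>0$; choosing $r'<r$ and using that $\sum _\beta e^{-\delta |\beta |^{1/2s}}<\infty $ for every $\delta >0$ yields $|c_{\alpha _{\tau (1)}}(\fy _{1,\tau})|\lesssim e^{-r|\alpha _{\tau (1)}|^{1/2s}}$, i.e.\ $\fy _{1,\tau}\in \maclH _s$. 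In the case $s=\flat _\sigma$ the same computation goes through with the factorial weights $r^{|\alpha |}\alpha !^{\pm 1/2\sigma }$ in place of the exponential ones, using summability of $\sum _\beta r^{|\beta |}\beta !^{-1/2\sigma }$; the Beurling spaces $\maclH _{0,s}$ and the endpoint $s=0$ are handled by interchanging the roles of \emph{some} and \emph{every} in the quantifiers, exactly as in Theorem~\ref{Thm:TensorprodDistr}. I expect this quantifier-matching in the summation---ensuring that the loss $r'$ from $f_{\tau (2)}$ is strictly smaller than the gain $r$ from $\fy $---to be the only genuine point requiring care; everything else is bookkeeping inherited from the already-proved Theorem~\ref{Thm:TensorprodDistr}.
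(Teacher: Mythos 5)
Your proposal is correct and follows the paper's own route: the paper states this theorem without a separate proof, presenting it as a notational reformulation of Theorem \ref{Thm:TensorprodDistr}, which is exactly how you treat it. Your explicit unwinding of $\fy _{j,\tau}$ for the two elements of $S_2$ and the Hermite-coefficient estimate showing $\fy _{1,\tau}\in \maclH _s$ (matching the quantifiers on $r$ and $r'$) simply make precise the details the paper leaves implicit, using the same coefficient-level arguments as in the proof of Theorem \ref{Thm:TensorprodDistr}.
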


\par

The multi-linear version of the previous theorem is the following,
and follows by similar arguments. The details are left for the reader.

\par

\renewcommand{\rubrik}{Theorem \ref{Prop:PilTensorM}$'$}

\par

\begin{tom}
Let $\tau \in S_2$, $d=d_1+\cdots +d_n$,
$s\in \overline{\mathbf R_\flat}$,
$d_{j,\tau}$ be as in
\eqref{Eq:stauDef}, $f_j\in (\maclH _s)'(\rr {d_j})$,
$\fy \in \maclH _s(\rr {d})$
and let $\fy _{j,\tau}$ be given by \eqref{Eq:DefFyn} and
\eqref{Eq:DefFyj}, $j=1,\dots ,n$.
Then $\fy _{j,\tau}\in  \maclH _s
(\rr {d_{j,\tau}})$, and
there is a unique distribution $f$ in $(\maclH _s)'(\rr {d})$ 
such that
for every $\fy _{j}\in \maclH _s(\rr {d_{j}})$,
$j=1,\dots ,n$,
$$
\scal f{\fy _1\otimes \cdots \otimes \fy _n}
=
\prod _{k=1}^n \scal {f_k}{\fy _k}
\quad \text{and}\quad
\scal f{\fy}=\scal {f_{\tau (1)}}{\fy _{1,\tau}}
$$
hold.

\medspace

The same holds true with 
$\maclH _{0,s}$ and
$\maclH _{0,s}'$ in place of
$\maclH _{s}$ and
$\maclH _{s}'$, respectively, at each occurrence.
\end{tom}

\par

\begin{rem}\label{Rem:STFTforPilSpaces}
Only certain parts of the properties in Example \ref{Ex:STFTforGS} carry over
to Pilipovi{\'c} spaces of functions and distributions, in the case when
these spaces do not agree with Gelfand-Shilov spaces of functions
and distributions. (See Remark \ref{Remark:GSHermite}.) In order to deal with
such questions, it it convenient to consider the image of such spaces under
the Bargmann transform, which is defined by
$$
(\mathfrak V_df)(z) = \pi ^{-\frac d4}\scal f{\exp (-\frac 12
\big (\scal zz+|\cdo |^2) +\sqrt 2\scal z\cdo\big )},
$$
when $f$ is a suitable (ultra-)distribution (cf. \cite{Ba,Toft15}).

\par

In fact, let $\maclA _s(\cc d)$ ($\maclA _{0,s}(\cc d)$) be the set of all
$F$ in $A(\cc d)$, the set of entire functions on $\cc d$, which satisfies
$$
|F(z)|\lesssim e^{r(\log \eabs z)^{\frac 1{1-2s}}}
$$ 
when $s<\frac 12$ and
$$
|F(z)| \lesssim e^{r|z|^{\frac {2\sigma}{\sigma +1}}}
$$
when $s=\flat _\sigma$, for some $r>0$ (for every $r>0$). Also let
$\maclA _{0,1/2}(\cc d)$ be the set of all $F\in A(\cc d)$ such that
$|F(z)|\lesssim e^{r|z|^2}$ for all $r>0$.
Then it is proved in \cite{FeGaTo,Toft15} that $\mathfrak V_d$ is bijective from
$\maclH _s(\rr d)$ to $\maclA _s(\cc d)$ when $s\in \mathbf R_\flat$ and $s<\frac 12$,
and from $\maclH _{0,s}(\rr d)$ to $\maclA _{0,s}(\cc d)$ when
$s\in \mathbf R_\flat$ and $s\le \frac 12$.

\par

By straight-forward computations we have
\begin{align*}
(\mathfrak V_d(f(\cdo -x_0)))(z) &= e^{\sqrt 2 \scal z{x_0} +\frac 12|x_0|^2}
(\mathfrak V_df)(z+\sqrt 2 \, x_0)
\intertext{and}
(\mathfrak V_d(fe^{-i\scal \cdo {\xi _0}}))(z) &= e^{-\sqrt 2 \, i\scal z{\xi _0}+\frac 12|\xi _0|^2}
(\mathfrak V_df)(z+i\sqrt 2 \, \xi _0).
\end{align*}
Consequently, by Remark \ref{Remark:GSHermite} and
the mapping properties of the Pilipovi{\'c} spaces above under the Bargmann
transform, it follows that the following is true:
\begin{enumerate}
\item if  $\maclH _s(\rr d)$ and $\maclH _s'(\rr d)$ are invariant under translations and modulations,
if and only if $s\ge \flat _1$;

\vrum

\item if  $\maclH _{0,s}(\rr d)$ and $\maclH _{0,s}'(\rr d)$ are invariant under
translations and modulations, if and only if $s> \flat _1$.
\end{enumerate}
In particular, the short-time Fourier transform
$$
V_\phi f(x,\xi ) =\scal f{\overline {\phi (\cdo -x)}e^{-i\scal \cdo \xi}} 
$$
makes sense as a smooth function when $s\ge \flat _1$, $f\in \maclH _s'(\rr d)$
and $\phi \in \maclH _s(\rr d)$, and when $s> \flat _1$, $f\in \maclH _{0,s}'(\rr d)$
and $\phi \in \maclH _{0,s}(\rr d)$.

\par

On the other hand, for $s<\frac 12$, it seems to be difficult to guarantee that
\eqref{Eq:STFTident2} is true in general, since the map $U$ in Example
\ref{Ex:STFTforGS} seems not to be well-defined on Pilipovi{\'c} spaces
which fail to be Gelfand-Shilov spaces.
\end{rem}

\par

\end{document}